\def\BibTeX{{\rm B\kern-.05em{\sc i\kern-.025em b}\kern-.08em
    T\kern-.1667em\lower.7ex\hbox{E}\kern-.125emX}}
\DeclareMathAlphabet\EuRoman{U}{eur}{m}{n}
\SetMathAlphabet\EuRoman{bold}{U}{eur}{b}{n}
\declaretheorem[style=plain,numberwithin=section,name=Theorem]{theorem}
\declaretheorem[style=plain,sibling=theorem,name=Lemma]{lemma}
\declaretheorem[style=plain,sibling=theorem,name=Proposition]{proposition}
\declaretheorem[style=definition,sibling=theorem,name=Definition]{definition}
\declaretheorem[style=remark,qed=$\triangleleft$,sibling=theorem,name=Remark]{remark}
\numberwithin{theorem}{section}
\def\[#1\]{\begin{align}#1\end{align}}
\def\*[#1\]{\begin{align*}#1\end{align*}}
\newcommand\optparen[1]{\ifthenelse{\equal{#1}{}}{}{(#1)}}
\newcommand{\Reals}{\mathbb{R}}
\newcommand{\Nats}{\mathbb{N}}
\newcommand{\NNReals}{\Reals_+}
\newcommand{\dom}{\textrm{dom}}
\DeclareMathOperator*{\newlim}{\mathrm{lim}\vphantom{\mathrm{infsup}}}
\DeclareMathOperator*{\newmin}{\mathrm{min}\vphantom{\mathrm{infsup}}}
\DeclareMathOperator*{\newinf}{\mathrm{inf}\vphantom{\mathrm{infsup}}}
\DeclareMathOperator*{\newsup}{\mathrm{sup}\vphantom{\mathrm{infsup}}}
\renewcommand{\lim}{\newlim}
\renewcommand{\min}{\newmin}
\renewcommand{\inf}{\newinf}
\renewcommand{\sup}{\newsup}
\newcommand{\PosReals}{\Reals_{+}}
\newcommand{\lcrx}[4][{-1}]{
	\IfEq{#1}{-1}{\left #2 {{{{#3}}}} \right #4}{
   	\IfEq{#1}{0}{#2 {{{{#3}}}} #4}{
	\IfEq{#1}{1}{\bigl #2 {{{{#3}}}} \bigr #4}{
	\IfEq{#1}{2}{\Bigl #2 {{{{#3}}}} \Bigr #4}{
	\IfEq{#1}{3}{\biggl #2 {{{{#3}}}} \biggr #4}{
	\IfEq{#1}{4}{\Biggl #2 {{{{#3}}}} \Biggr #4}{
    \GenericWarning{"4th argument to lcrx must be -1, 0, 1, 2, 3, or 4"}
    }}}}}}}
\newcommand{\Borel}{\operatorname{Borel}}
\newcommand{\inlinedfn}{\textbf}
\theoremstyle{plain}
\newtheorem{claim}[theorem]{Claim}
\newtheorem*{claim*}{Claim}
\newtheorem*{theorem*}{Theorem}
\newtheorem*{lemma*}{Lemma}
\newtheorem*{proposition*}{Proposition}
\newtheorem*{corollary*}{Corollary}
\newtheorem*{subclaim*}{Subclaim}
\theoremstyle{definition}
\newtheorem*{definition*}{Definition}
\newtheorem*{example*}{Example}
\newtheorem*{fact*}{Fact} 
\theoremstyle{remark}
{\end{minipage}\end{equation}}
\newenvironment{eqpar*}{\begin{equation*}\begin{minipage}{0.8\columnwidth}}%
{\end{minipage}\end{equation*}}
\DeclareMathOperator{\prob}{\mathcal M_1}
\newcommand{\bsim}[1]{\mathbin{\sim_{#1}}}
\newcommand{\nbsim}[1]{\mathbin{\not\sim_{#1}}}
\newcommand{\rsim}[1]{\bsim[r]}
\newcommand{\nrsim}[1]{\nbsim[r]}
\DeclareMathOperator{\powerset}{\mathcal{P}}
\DeclareMathOperator{\nat}{\mathbb{N}}
\providecommand{\int}{\mathbb{Z}}
\providecommand{\res}{\mathbin{\upharpoonright} }
\providecommand{\setdef}{\;|\;}
\providecommand{\Borel}{\mathbf{Borel}}
\newcommand{\N}{\mathbb{N}}
\newcommand{\indicate}{\mathbb{I}}
\renewcommand{\*}{{}^*}
\newcommand{\Ex}{\mathbb{E}}
\newcommand{\PR}{\mathbb{P}}
\DeclareMathOperator{\distribution}{\mathcal L}
\newcommand{\measreg}{\operatorname{\mathcal{M}_+}}
\newtheoremstyle{named}{}{}{\itshape}{}{\bfseries}{.}{.5em}{\thmnote{#3's }#1}
\theoremstyle{named}
\theoremstyle{definition}
\DeclareMathOperator{\radonprob}{\mathcal R_1}
\crefname{lemma}{Lemma}{Lemmas}
\crefname{corollary}{Corollary}{Corollaries}
\crefname{theorem}{Theorem}{Theorems}
\crefname{problem}{Conjecture}{Conjectures}
\crefname{statement}{Statement}{Statement}
\begin{document}
\setlength{\marginparsep}{0.05in}

\title[de Finetti's theorem, regular versions, and strong laws]
{de Finetti's theorem and the existence of regular conditional distributions and strong laws on exchangeable 
algebras
}

\author[P.\ Potaptchik]{Peter Potaptchik$^{1}$}

\address{$^1$ University of Oxford}
\author[D.\ M.\ Roy]{Daniel M. Roy$^{2,3}$}
\address{$^2$ University of Toronto,  
$^3$ Vector Institute}
\author[D.\ Schrittesser]{David Schrittesser$^{4}$}
\address{$^4$ Institute for Advanced Studies in Mathematics, Harbin Institute for Technolgy}

\maketitle

\begin{abstract}
We show the following generalizations of the de Finetti--Hewitt--Savage theorem:
Given an exchangeable sequence of random elements, the sequence is conditionally i.i.d.\ if and only if each random element 
admits a regular conditional distribution given the exchangeable $\sigma$-algebra (equivalently, the shift invariant or the tail algebra).
We use this result, which holds without any regularity or technical conditions, to demonstrate that 
any exchangeable sequence of random elements whose common distribution is Radon is conditional iid.
\end{abstract}

\section{Introduction}

In this paper, we prove a generalization of de Finetti's representation theorem for exchangeable sequences.
According to this theorem, any infinite, exchangeable sequence of real random variables is identically and independently distributed, conditioned on the exchangeable algebra.
Besides being one of the simplest examples of a probabilistic symmetry and invariance principle,
de Finetti's theorem is considered a central result in Bayesian statistics, where it leads 
from a (subjective) distribution on a sequence of observables to the existence of a (random) parameter, conditioned on which the sequence is conditionally i.i.d.
See \cite[p.~12]{schervish} for a discussion

Recall that a sequence of random elements $X = (X_i)_{i \in \Nats}$ is \inlinedfn{exchangeable} if for every $n \in \Nats$ and every permutation $\sigma$ of $\{1,\hdots, n\}$, the distributions of $(X_1,  \hdots, X_n)$ and $(X_{\sigma(1)}, \hdots, X_{\sigma(n)})$
are equal.

\begin{theorem}[de Finetti, Ryll-Nardzewski]\label{t.classic}
Let $X = (X_i)_{i \in \Nats}$ be a sequence of real random variables.
Then the following are equivalent:
\begin{enumerate}
\item\label{i.exchangeable} $X$ is exchangeable,
\item\label{i.conditionally.iid} $X$ is conditionally iid,
\item\label{i.mixed.iid} $X$ is mixed iid.
\end{enumerate}
\end{theorem}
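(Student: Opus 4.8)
The plan is to establish the cycle (i) $\Rightarrow$ (ii) $\Rightarrow$ (iii) $\Rightarrow$ (i). Two of these links are soft, and the heart of the matter is (i) $\Rightarrow$ (ii), the classical de Finetti--Ryll-Nardzewski argument, which I would run through reverse martingales.

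First the soft links. For (iii) $\Rightarrow$ (i): if the law of $X$ on $\mathbb{R}^{\mathbb{N}}$ is $\int \nu^{\otimes\infty}\,\Lambda(d\nu)$ for some probability measure $\Lambda$ on $\mathcal{M}_1(\mathbb{R})$, then every finite coordinate permutation fixes each product law $\nu^{\otimes\infty}$ and hence fixes the mixture, so $X$ is exchangeable. For (ii) $\Rightarrow$ (iii): if $X$ is conditionally iid given a sub-$\sigma$-algebra $\mathcal{G}$, use that $\mathbb{R}$ is a Borel space to pick a regular conditional distribution $\mu$ of $X_1$ given $\mathcal{G}$; then $\mu$ is a random element of $\mathcal{M}_1(\mathbb{R})$, conditional independence plus equality of the conditional marginals gives that the conditional law of $X$ given $\mathcal{G}$ is $\mu^{\otimes\infty}$, and taking $\Lambda = \mathrm{law}(\mu)$ exhibits $X$ as mixed iid.

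Now (i) $\Rightarrow$ (ii). Let $\mathcal{E}$ be the exchangeable $\sigma$-algebra and, for $n \in \mathbb{N}$, let $\mathcal{G}_n$ be the $\sigma$-algebra of events invariant under permutations of the first $n$ coordinates, so that $\mathcal{G}_1 \supseteq \mathcal{G}_2 \supseteq \cdots$ with $\bigcap_n \mathcal{G}_n = \mathcal{E}$. For $n \ge k$ and bounded measurable $g \colon \mathbb{R}^k \to \mathbb{R}$, exchangeability yields the symmetrization identity
\begin{equation*}
\mathbb{E}\big[ g(X_1,\dots,X_k) \mid \mathcal{G}_n \big] \;=\; \frac{1}{(n)_k}\sum_{\substack{i_1,\dots,i_k \in [n]\\ \text{distinct}}} g(X_{i_1},\dots,X_{i_k}), \qquad (n)_k := n(n-1)\cdots(n-k+1),
\end{equation*}
obtained by averaging $\mathbb{E}[g(X_1,\dots,X_k)\,\phi] = \mathbb{E}[g(X_{\pi(1)},\dots,X_{\pi(k)})\,\phi]$ over $\pi \in S_n$ for $\mathcal{G}_n$-measurable $\phi$. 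Hence $n \mapsto \mathbb{E}[g(X_1,\dots,X_k)\mid\mathcal{G}_n]$ is a reverse martingale and converges a.s.\ and in $L^1$ to $\mathbb{E}[g(X_1,\dots,X_k)\mid\mathcal{E}]$. Specializing to $g(x_1,\dots,x_k) = f_1(x_1)\cdots f_k(x_k)$ with the $f_j$ bounded, the right-hand side differs by $O(1/n)$ from $\prod_{j=1}^{k}\big(\tfrac1n\sum_{i=1}^n f_j(X_i)\big)$, since the non-distinct tuples number only $O(n^{k-1})$, and each factor converges a.s.\ to $\mathbb{E}[f_j(X_1)\mid\mathcal{E}]$ by the case $k=1$. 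Therefore
\begin{equation*}
\mathbb{E}\Big[ \textstyle\prod_{j=1}^{k} f_j(X_j) \;\Big|\; \mathcal{E} \Big] \;=\; \prod_{j=1}^{k}\mathbb{E}\big[ f_j(X_1)\mid\mathcal{E} \big] \qquad \text{a.s.},
\end{equation*}
which is exactly conditional independence of $X_1,\dots,X_k$ given $\mathcal{E}$ together with equality of their conditional laws. To upgrade this to the statement that $X$ is conditionally iid, choose (again using that $\mathbb{R}$ is Borel) a regular conditional distribution $\mu$ of $X_1$ given $\mathcal{E}$; the moment identity forces the regular conditional law of $(X_1,\dots,X_k)$ given $\mathcal{E}$ to be $\mu^{\otimes k}$ for every $k$, and an Ionescu--Tulcea / Kolmogorov extension argument then gives that the conditional law of $X$ given $\mathcal{E}$ is $\mu^{\otimes\infty}$.

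The step I expect to be the main obstacle is the reverse-martingale computation: pinning down the symmetrization identity and the reverse-martingale property cleanly, and then controlling the diagonal terms when passing to the product. The other genuinely load-bearing ingredient is the existence of regular conditional distributions, which is available here only because the $X_i$ take values in $\mathbb{R}$ — removing precisely this hypothesis is what the rest of the paper is about.
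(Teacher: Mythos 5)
Your argument is correct: the cycle (i) $\Rightarrow$ (ii) $\Rightarrow$ (iii) $\Rightarrow$ (i), with the symmetrization identity and reverse-martingale convergence carrying the main step, is the classical Ryll-Nardzewski-style proof, and the two soft implications are handled properly by invoking regular conditional distributions, which exist because $\mathbb{R}$ is standard Borel. Note, however, that the paper does not prove \cref{t.classic} at all --- it is quoted as classical with references --- so the real comparison is with \cref{thm:condiiddefinetti} and its proof, of which \cref{t.classic}(i)$\Leftrightarrow$(ii) is the special case where the regular conditional distribution of $X_1$ given $\Sigma_\infty$ exists automatically. There, your two load-bearing ingredients are deliberately separated: the existence of the rcd is promoted to a hypothesis (this is precisely what fails beyond nice spaces), and the convergence of empirical averages is imported as the strong law for exchangeable sequences (\cref{prop:exchslln}) rather than re-derived through the reverse filtration $\mathcal{G}_n$. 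The combinatorial core is identical --- non-distinct index tuples are negligible because $n(n-1)\cdots(n-k+1)/n^k \to 1$ --- but the paper runs this estimate inside integrals over events $E \in \mathcal{E}_\infty$ with dominated convergence, whereas you run it pathwise on the a.s.\ limits before integrating. Your route buys a self-contained proof that also covers the mixed-iid equivalence (iii), which \cref{thm:condiiddefinetti} does not address; the paper's route buys generality, isolating the rcd of $X_1$ as the only point where the Borel structure of $\mathbb{R}$ enters, exactly as you observe in your closing remark. The steps you gloss are routine on $\mathbb{R}$ but worth naming: measurability of the symmetrized sums with respect to $\mathcal{G}_n$; the upgrade from the a.s.\ moment identities (whose exceptional null set depends on $f_1,\dots,f_k$) to the kernel identity $\mu^{\otimes k}$, which requires a countable generating class of rectangles; and measurability of $\nu \mapsto \nu^{\otimes\infty}(A)$ in the mixing step, needed for $\int \nu^{\otimes\infty}\,\Lambda(d\nu)$ to be well defined.
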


The first form of the theorem, proven by de Finetti in 1928, established the equivalence of 
\cref{i.exchangeable} and \cref{i.mixed.iid}
for exchangeable sequences $X = (X_i)_{i \in \Nats}$ of \emph{binary} random variables (i.e., $X_i \in \{0,1\}$ a.s.; see \cite{definetti29}); de Finetti latter proved the same equivalence for real random variables \cite{definetti37}.
The equivalence between 
\cref{i.exchangeable} and \cref{i.conditionally.iid} 
was shown by Ryll-Nardzewski \citep{ryll-nardzewski}.

Given its foundational importance, over the years, much interest has been given to generalizations of this theorem to sequences of random elements 
in more general spaces.

\cref{t.classic} holds more generally for sequences of random elements 
$(X_i)_{i \in \Nats}$ taking values in any 
\inlinedfn{standard Borel space}, 
i.e., a measurable space $\big(S,\mathcal S\big)$ such that
$S$ is isomorphic to %
an uncountable Borel subset of a Polish space $S'$, equipped with the  restriction of the Borel algebra of $S'$,
via a map $f$ such that both $f$ and $f^{-1}$ are measurable.
This generalization is 
immediate using the isomorphism theorem for measures (see \cite[12.B, 17.43]{kechris}),
according to which any standard Borel space $S$ is isomorphic to $[0,1]$ 
via $f$ as above.\footnote{This observation is made, e.g., by Diaconis and Freedman \citep{diaconis-freedman}.}

A famous, further generalization was proved by Hewitt--Savage \cite{hewitt-savage}; namely, they allow the $X_i$ to take values in any compact Hausdorff space, equipped with its Baire $\sigma$-algebra.
Varadarajan \cite[p.~219]{varadarajan} shows that this also implies \cref{t.classic} for $X_i$ which take values in an \emph{analytic space} (the analytic spaces are the continuous images of Polish spaces).

That some restriction on the class of processes is needed was made clear by Dubins and Freedman, who gave a counterexample: 
There is a separable metric space $S$ for which \cref{t.classic} fails, that is, there is a sequence of random elements of this space that is exchangeable, but not mixed iid.
Later, Freedman \cite{freedman} also produced an example of an exchangeable sequence of random elements that is mixed iid, but not conditional iid.

Another approach, which has come to fruition more recently, is to put requirements directly on the distribution of $X$, instead of on the space in which the $X_i$ take their values.
In this direction, \citet[459H]{fremlin} shows a version of \cref{t.classic} in which it is supposed that $X$ be \emph{jointly quasi-Radon} distributed (for this definition, see \cite[411H]{fremlin}) as well as that $\distribution(X_i)$ be Radon, thereby verifying a conjecture made by Ressel \cite{ressel}.

Recall  that a \inlinedfn{Radon} probability measure is 
 a Borel probability measure $\mu$ on a Hausdorff topological space $S$ such that
 \begin{equation*}
\forall B \in \Borel(S)\; \mu(B) = \sup \{ \mu(K) \setdef\text{ $K \subseteq B$, $K$ is compact}\}
 \end{equation*}
 i.e., $\mu$ is inner regular.\footnote{Note that Radon probability measures, as per the definition given by Fremlin in \cite{fremlin}, are exactly the completions of our notion of Radon probability measures.}
In a recent breakthrough,  \citet{alam} used a nonstandard analytic argument to establish the following:
\begin{theorem}[{\cite[Thm.~4.7]{alam}}]\label{t.alam}
Let $X = (X_i)_{i \in \Nats}$ be an exchangeable sequence of random elements whose common distribution
$\distribution(X_i)$ is Radon.
Then $X$ is mixed iid.
\end{theorem}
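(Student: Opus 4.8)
The plan is to derive \cref{t.alam} --- indeed the stronger assertion that such an $X$ is conditionally i.i.d.\ --- from our characterization of conditional i.i.d.: namely, that an exchangeable sequence is conditionally i.i.d.\ if and only if each of its terms admits a regular conditional distribution (RCD) given the exchangeable $\sigma$-algebra $\mathcal E$. Only the ``if'' direction is needed here. Moreover, since the $X_i$ are identically distributed and $\mathcal E$ is permutation invariant, one checks directly that a single kernel serving as an RCD of $X_1$ given $\mathcal E$ is automatically an RCD of every $X_i$ given $\mathcal E$. Hence the whole statement reduces to the purely measure-theoretic claim: \emph{a random element $Z$ of a Hausdorff space $S$ whose law $\mu = \distribution(Z)$ is Radon admits an RCD given an arbitrary sub-$\sigma$-algebra $\mathcal G$ of the underlying probability space.}

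To prove this claim, I would first use inner regularity of $\mu$ to choose compact sets $K_1 \subseteq K_2 \subseteq \cdots \subseteq S$ with $\mu\big(\bigcup_n K_n\big) = 1$; then $Z$ takes values in the $\sigma$-compact Hausdorff space $K = \bigcup_n K_n$ almost surely, so, discarding a null set, we may replace $S$ by $K$ and assume $S$ is $\sigma$-compact. For each bounded continuous $f \colon S \to \Reals$ fix a version of $\EX[f(Z) \mid \mathcal G]$, call it $\pi_f$. The goal is to glue, for each fixed $\omega$, the data $(\pi_f(\omega))_f$ into a Radon probability measure $\kappa(\omega, \cdot)$ on $S$ such that $\omega \mapsto \kappa(\omega, B)$ is $\mathcal G$-measurable and a version of $\Pr(Z \in B \mid \mathcal G)$ for every Borel $B$. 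On any finite-dimensional reduction of the problem --- the law of the image of $Z$ under a continuous map into a compact metrizable space --- the target is standard Borel, so an RCD exists by the classical theory; the work is to assemble all of these reductions coherently. This is where the Radon hypothesis is decisive: each candidate fibre measure is tight and lives on a $\sigma$-compact set, so a projective-limit / Prokhorov-type argument should produce, for each $\omega$ outside a $P$-null set, a genuine tight (hence Radon) probability measure $\kappa(\omega, \cdot)$ whose values on compact sets are prescribed by the $\pi$-data; inner regularity then determines $\kappa(\omega, \cdot)$ on all Borel sets and yields the kernel property. Alternatively, one may invoke the disintegration theory for Radon (equivalently, compact) measures --- as developed in Fremlin's treatise and by Pachl and others --- applied to the map carrying $Z$ from $(\Omega, \mathcal F, P)$ into $(\Omega, \mathcal G, P|_{\mathcal G})$.

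I expect the coherent gluing to be the main obstacle. For each \emph{finite} family of Borel sets, countable additivity of $B \mapsto \Pr(Z \in B \mid \mathcal G)$ holds only $P$-almost surely, with an exceptional null set depending on the family; and $\mathcal G = \mathcal E$ need not be countably generated modulo $P$-null sets --- for an exchangeable sequence in a large space the directing random measure is not, in general, a random element of a standard Borel space, so the usual countable-generator reduction is unavailable. The remedy is precisely the Radon hypothesis, which supplies enough compactness to upgrade almost-sure consistency to a genuine kernel, e.g.\ through the measurable-lifting theorem applied along the compacts $K_n$ together with the structure theory of measure algebras and $\tau$-additivity of Radon measures, or through the cited disintegration machinery. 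Once the RCD of $X_1$ given $\mathcal E$ is in hand, our main theorem gives that $X$ is conditionally i.i.d., which is strictly stronger than the mixed-i.i.d.\ conclusion of \cref{t.alam}.
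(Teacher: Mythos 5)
Your reduction is the same as ours: by \cref{thm:condiiddefinetti} it suffices to produce a regular conditional distribution of $X_1$ given $\mathcal E_\infty$, and this would yield the stronger conclusion of \cref{t.sufficient}, of which \cref{t.alam} is a consequence. The gap is in the only step that carries real content: you replace the construction of that kernel by the unproved claim that a random element with Radon law admits an RCD given an \emph{arbitrary} sub-$\sigma$-algebra, and then only sketch how one might prove it, yourself flagging the ``coherent gluing'' as the main obstacle. That obstacle is not incidental; it is the whole difficulty. The classical existence theorems (Ji\v{r}ina-type: perfect or compact law) require the value $\sigma$-algebra to be countably generated, which $\Borel(S)$ need not be for a nonmetrizable Hausdorff $S$, and the devices you name do not close the gap as stated. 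A lifting of $L^\infty(\Omega,\mathcal G,\PR)$ applied to $f\mapsto \Ex[f(X_1)\mid\mathcal G]$, or a projective limit over continuous images in compact metrizable spaces, produces at best a candidate measure whose consistency with $\Ex[\indicate\{X_1\in B\}\mid\mathcal G]$ can be verified on \emph{Baire}-type sets (countable limits of continuous data); upgrading to all Borel $B$ requires knowing that $\kappa(\omega,\cdot)$ annihilates every Borel $\distribution(X_1)$-null set for a.e.\ $\omega$, an uncountable family of conditions that liftings and Prokhorov-type extractions do not respect --- exactly the strong-lifting-flavoured territory you gesture at. (Moreover, for merely Hausdorff $S$ there may be too few continuous maps into metrizable compacta for your ``finite-dimensional reductions'' to determine the Borel structure.) Unless you prove this disintegration statement or cite a theorem covering it verbatim (compact/Radon law, arbitrary sub-$\sigma$-algebra, no countable-generation hypothesis), the proof is incomplete.

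This is also where our route differs. We never invoke a general disintegration theorem: exchangeability hands us a canonical $\omega$-wise candidate, the empirical measures $\mu_{\omega,n}$. Radon-ness of $\distribution(X_1)$ gives almost-sure tightness of $(\mu_{\omega,n})_{n\in\N}$ (\cref{l.bound}, \cref{l.uniform}), Topsøe's theorem (\cref{t.topsoe.radon}) then yields an $\omega$-wise Radon limit $\mu_\omega$ in the A-topology, and the strong law for exchangeable sequences (\cref{prop:exchslln}) together with inner regularity identifies $\mu_\omega(B)$ with $\Ex[\indicate\{X_1\in B\}\mid\mathcal E_\infty]$ one Borel set at a time, with the exceptional null set allowed to depend on $B$ --- which is all an RCD requires. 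This set-by-set verification is precisely what sidesteps the gluing problem your general claim must face head-on. If you can locate a precise citation for the disintegration claim, your argument would become a genuine (and shorter) alternative; as written, it is a plan rather than a proof.
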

\citet{alam} also shows that the Hewitt--Savage generalization can be derived as a corollary to the above theorem (or even less general theorems which require Radon-ness, such as Ressel's earlier result; %
see \cite[Appendix B]{alam}).

In the present paper, we provide an exact characterization of which exchangeable processes are conditionally iid.
Towards stating this result, fix a basic probability space $(\Omega, \Sigma, \PR)$ on which a sequence of random elements
is defined.
In the following theorem, 
$\mathcal{I}_{\infty}$ denotes the left-shift invariant $\sigma$-algebra, $\mathcal{T}_{\infty}$ denotes the tail $\sigma$-algebra, and $\mathcal{E}_{\infty}$
denotes the exchangeable $\sigma$-algebra with respect to this sequence. 
By a \inlinedfn{regular conditional distribution}
 of a random variable $X : (\Omega,\Sigma) \to (S,\mathcal S)$ given a  $\sigma$-algebra $\mathcal{G}$,
 we mean a Markov kernel 
 $\kappa : \Omega \times \mathcal S \to [0,1]$
 from $(\Omega, \mathcal G)$ to $(S,\mathcal S)$ such that, for all $B \in \mathcal{S}$ and $\PR$-almost all $\omega \in \Omega$, 
 we have $\kappa(\omega, B) = \Ex[\indicate\{X \in B\}|\mathcal{G}](\omega)$.

\begin{theorem}
    \label{thm:condiiddefinetti}
    Let $(\Omega, \Sigma, \PR)$ be a probability space and, on this space, let $(X_n)_{n \in \N}$ be a sequence of exchangeable random elements in a measurable space $(S, \mathcal{S})$. Let $\Sigma_{\infty}$ be one of $\mathcal{I}_{\infty}, \mathcal{T}_{\infty}, \mathcal{E}_{\infty}$. The following are equivalent: 
    \begin{enumerate}
        \item $X_1$ given $\Sigma_{\infty}$ admits a regular conditional distribution. \label{item:DFcond1}
        \item $(X_n)_{n \in \N}$ is conditionally iid given some sub $\sigma$-algebra of $\Sigma$.
        \label{item:DFcond2}
    \end{enumerate}
\end{theorem}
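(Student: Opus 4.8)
The plan is to prove the two implications separately, and in both to lean on one preliminary fact that I would establish first. For an arbitrary exchangeable sequence the empirical frequencies $\tfrac1n\sum_{j\le n}\indicate\{X_j\in B\}$ converge $\PR$-a.s.\ — this is the de Finetti reverse martingale $\Ex[\indicate\{X_1\in B\}\mid\mathcal E_n]$, or Birkhoff — and their limit $Z_B$ is a genuinely $\Sigma_\infty$-measurable version of $\Ex[\indicate\{X_1\in B\}\mid\Sigma_\infty]$. To see this, note that each of $\mathcal I_\infty,\mathcal T_\infty,\mathcal E_\infty$ is invariant under every finite coordinate permutation: for $\mathcal T_\infty$, and hence for $\mathcal I_\infty\subseteq\mathcal T_\infty$, because a tail event ignores any finite block of coordinates, and for $\mathcal E_\infty$ by definition. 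Combined with exchangeability this gives $\Ex[\indicate\{X_i\in B\}\,W]=\Ex[\indicate\{X_1\in B\}\,W]$ for every $i$ and every bounded $\Sigma_\infty$-measurable $W$; averaging over $i\le n$ and passing to the $L^1$ limit yields $\Ex[Z_B\mid\Sigma_\infty]=\Ex[\indicate\{X_1\in B\}\mid\Sigma_\infty]$, and since $Z_B$ is itself $\Sigma_\infty$-measurable (a Cesàro limit of the $\indicate\{X_j\in B\}$ is unchanged by altering finitely many terms) the two coincide a.s. So in both directions the natural candidate for a regular conditional distribution of $X_1$ given $\Sigma_\infty$ is $\omega\mapsto\bigl(B\mapsto Z_B(\omega)\bigr)$, and the whole game is whether this candidate is a genuine random probability measure.

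For $(1)\Rightarrow(2)$: given a regular conditional distribution $\kappa$ of $X_1$ given $\Sigma_\infty$, the preliminary fact gives $\kappa(\cdot,B)=Z_B$ a.s., so $\tfrac1n\sum_{j\le n}\indicate\{X_j\in B\}\to\kappa(\cdot,B)$ a.s. Since $\kappa$ is a Markov kernel out of $(\Omega,\Sigma_\infty)$, the countable products $\kappa(\omega,\cdot)^{\otimes\N}$ exist — forming a product of probability measures needs no regularity — and assemble into a Markov kernel $\bar\kappa$ from $(\Omega,\Sigma_\infty)$ to $(S^\N,\mathcal S^{\otimes\N})$. I would then show that $\bar\kappa$ is a regular conditional distribution of $X=(X_n)_n$ given $\Sigma_\infty$; as $\bar\kappa(\omega,\cdot)$ is a product measure this says exactly that $X$ is conditionally iid given $\Sigma_\infty$, which is (2). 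By a Dynkin-class argument on the $\pi$-system of cylinders it suffices to prove, for all $k$ and $B_1,\dots,B_k\in\mathcal S$,
\begin{equation}
\Ex\Bigl[\,\prod_{i=1}^{k}\indicate\{X_i\in B_i\}\,\Big|\,\Sigma_\infty\,\Bigr]=\prod_{i=1}^{k}\kappa(\cdot,B_i)\quad\text{a.s.}\tag*{$(\star)$}
\end{equation}
and this is the classical de Finetti averaging trick: by exchangeability and finite-permutation invariance of $\Sigma_\infty$, the left-hand side of $(\star)$, call it $Y$, equals $\Ex[\prod_i\indicate\{X_{j_i}\in B_i\}\mid\Sigma_\infty]$ for every tuple of \emph{distinct} indices $j_1,\dots,j_k$, so $Y=\Ex[A_n\mid\Sigma_\infty]$ for all $n$, where $A_n$ is the average of $\prod_i\indicate\{X_{j_i}\in B_i\}$ over ordered $k$-tuples of distinct indices from $\{1,\dots,n\}$. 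Comparing with the average over all $n^k$ tuples — the two differ in only $O(n^{k-1})$ terms, each at most $1$ — gives $A_n=(1+o(1))\prod_{i=1}^k\bigl(\tfrac1n\sum_{j\le n}\indicate\{X_j\in B_i\}\bigr)+o(1)$, which converges $\PR$-a.s.\ and boundedly to $\prod_i\kappa(\cdot,B_i)$; by $L^1$-continuity of conditional expectation, $Y=\lim_n\Ex[A_n\mid\Sigma_\infty]=\Ex[\prod_i\kappa(\cdot,B_i)\mid\Sigma_\infty]=\prod_i\kappa(\cdot,B_i)$, the last step since each factor is $\Sigma_\infty$-measurable.

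For $(2)\Rightarrow(1)$: let $\nu$ be a directing kernel, so the conditional law of $X$ given the sub-$\sigma$-algebra $\mathcal G$ is $\nu(\omega,\cdot)^{\otimes\N}$ and $\nu(\cdot,B)=\Ex[\indicate\{X_1\in B\}\mid\mathcal G]$. Applying the ordinary strong law to each product measure $\nu(\omega,\cdot)^{\otimes\N}$ and integrating over $\omega$ yields $\tfrac1n\sum_{j\le n}\indicate\{X_j\in B\}\to\nu(\cdot,B)$ $\PR$-a.s.; by the preliminary fact this forces $\nu(\cdot,B)=Z_B=\Ex[\indicate\{X_1\in B\}\mid\Sigma_\infty]$ a.s.\ for every $B$. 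In particular $\nu(\cdot,B)$ is measurable for the $\PR$-completion $\overline{\Sigma_\infty}$, so $\nu$ is already a Markov kernel out of $(\Omega,\overline{\Sigma_\infty})$ and is a regular conditional distribution of $X_1$ given $\overline{\Sigma_\infty}$. It then remains to descend from $\overline{\Sigma_\infty}$ to $\Sigma_\infty$: fixing a countable algebra $\mathcal A_0$ generating $\mathcal S$, on a $\Sigma_\infty$-measurable co-null set the family $(Z_B)_{B\in\mathcal A_0}$ agrees with $\nu(\omega,\cdot)$ restricted to $\mathcal A_0$, hence is a premeasure and extends uniquely to a probability measure $\kappa(\omega,\cdot)$ on $\mathcal S$; off that set one puts $\kappa(\omega,\cdot)$ equal to a fixed measure, and monotone-class arguments carry $\Sigma_\infty$-measurability and the identity $\kappa(\cdot,B)=\Ex[\indicate\{X_1\in B\}\mid\Sigma_\infty]$ a.s.\ from $\mathcal A_0$ to all of $\mathcal S$, yielding the desired kernel.

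The step I expect to be the main obstacle is the identity $(\star)$. Everything around it is comparatively soft — the product-kernel construction uses nothing beyond the existence of countable products of probability measures, which is exactly what permits the hypothesis in (1) to speak only of $X_1$: once $\kappa$ is at hand, $\kappa(\omega,\cdot)^{\otimes\N}$ comes for free, and exchangeability enters only through $(\star)$, to certify that this product really is the conditional law of the whole sequence. The one other delicate point is the descent in $(2)\Rightarrow(1)$ from $\overline{\Sigma_\infty}$ to $\Sigma_\infty$ — converting the a.s.-defined family $B\mapsto Z_B$ into a bona fide $\Sigma_\infty$-measurable Markov kernel — which leans on reducing to a countably generated $\mathcal S$, on uniqueness of measure extensions, and on the fact that the exceptional set, though a priori described through the $\mathcal G$-measurable $\nu$, may be taken $\Sigma_\infty$-measurable and $\PR$-null.
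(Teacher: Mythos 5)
Both of your directions follow essentially the same route as the paper. Your ``preliminary fact'' is the paper's \cref{prop:exchslln}; your product kernel $\kappa^{\otimes\N}$ is the paper's $\kappa^{\infty}$; and your identity $(\star)$ is verified by exactly the paper's computation: compare the average over all $n^k$ index tuples with the average over tuples of distinct indices (an error of order $n^{-1}$), use dominated convergence, and use exchangeability together with permutation invariance of the conditioning algebra (the paper phrases this as integration against $E\in\mathcal G\subseteq\mathcal E_\infty$ on cylinder sets rather than as a conditional-expectation identity, but it is the same argument). For \cref{item:DFcond2}$\Rightarrow$\cref{item:DFcond1} your core step is also the paper's: the directing kernel $\nu$ satisfies $\nu(\cdot,B)=\Ex[\indicate\{X_1\in B\}\mid\mathcal G]$ a.s., the conditional-iid strong law (the paper's \cref{prop:condiidslln}, which you rederive by applying the ordinary SLLN under each product measure and disintegrating --- fine) gives convergence of the empirical frequencies to $\nu(\cdot,B)$, and \cref{prop:exchslln} identifies that limit with $\Ex[\indicate\{X_1\in B\}\mid\Sigma_\infty]$ a.s., for every $B\in\mathcal S$.

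The one place you go beyond the paper is the final ``descent'' in \cref{item:DFcond2}$\Rightarrow$\cref{item:DFcond1}, and as written it does not work at the stated level of generality. You fix ``a countable algebra $\mathcal A_0$ generating $\mathcal S$,'' but the theorem makes no countability assumption on $(S,\mathcal S)$ --- removing all such hypotheses is precisely the point of the result --- so this step is unavailable. Moreover, the co-null set on which $(Z_B)_{B\in\mathcal A_0}$ is a premeasure is naturally described through the $\mathcal G$-measurable $\nu$; your claim that it ``may be taken $\Sigma_\infty$-measurable'' is exactly what would need proof (finite additivity of $(Z_B)_{B\in\mathcal A_0}$ is countably many $\Sigma_\infty$-measurable conditions, but countable additivity on the algebra is not, once you can no longer compare with $\nu$). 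Note that the paper does not attempt this upgrade: its proof takes the directing kernel $\kappa$ itself, a Markov kernel in $\mathcal G$, and records that $\kappa(\cdot,A)=\Ex[\indicate\{X_1\in A\}\mid\Sigma_\infty]$ a.s. for every $A\in\mathcal S$, which is how \cref{item:DFcond1} is read there (compare also the end of the proof of \cref{t.sufficient}, where the constructed kernel is only checked to be $\Sigma$-measurable). So if you drop the descent and read \cref{item:DFcond1} as the paper does, your argument coincides with the paper's and is complete; if you insist on a kernel literally $\Sigma_\infty$-measurable in $\omega$, the descent as proposed has a genuine gap and would need a different argument.
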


Note that in the above theorem, we do not place any restrictions whatsoever on either the process or its state space.
As an application, we are able to prove a strengthening of \cref{t.alam}:

\begin{theorem}\label{t.sufficient}
Let $X = (X_i)_{i \in \Nats}$ be an exchangeable sequence of random elements whose common distribution
$\distribution(X_i)$ is Radon.
Then $X$ is conditionally iid.
\end{theorem}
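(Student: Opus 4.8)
The plan is to obtain \cref{t.sufficient} by combining \cref{thm:condiiddefinetti} with \cref{t.alam}. Since the two conditions in \cref{thm:condiiddefinetti} are equivalent, it suffices to produce, for one of the three admissible choices of $\Sigma_\infty$ --- say $\Sigma_\infty = \mathcal{I}_\infty$ --- a regular conditional distribution of $X_1$ given $\Sigma_\infty$; that is, a Markov kernel $\kappa : \Omega \times \mathcal S \to [0,1]$ from $(\Omega, \mathcal{I}_\infty)$ to $(S,\mathcal S)$ with $\kappa(\omega, B) = \Ex[\indicate\{X_1 \in B\} \mid \mathcal{I}_\infty](\omega)$ for every $B \in \mathcal S$ and $\PR$-a.a.\ $\omega$. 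Once $\kappa$ is in hand, \cref{thm:condiiddefinetti} immediately yields that $X$ is conditionally iid, which is the assertion of \cref{t.sufficient}. So the entire proof reduces to the construction of $\kappa$, and it is here that the Radon hypothesis must enter.

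The kernel will be the \emph{directing random measure}, realized concretely as an almost-sure limit of empirical measures. First, \cref{t.alam} tells us that $X$ is mixed iid: $\distribution(X) = \int \nu^{\otimes \N}\, \rho(\dee \nu)$ for a probability measure $\rho$ on the probability measures of $S$, which --- inspecting the representation in the Radon case --- may be taken to be concentrated on the Radon probability measures of $S$. Next, for each fixed $B \in \mathcal S$, exchangeability and the reverse-martingale (law of large numbers for exchangeable sequences) argument give that the empirical averages $P_n(B) := \frac1n \sum_{i=1}^{n} \indicate\{X_i \in B\}$ converge $\PR$-a.s.\ and in $L^1$ to $\Ex[\indicate\{X_1 \in B\} \mid \mathcal{I}_\infty]$. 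The substance is to choose the versions coherently: to exhibit a single $\PR$-null set off which $B \mapsto \Theta(\omega)(B) := \lim_n P_n(\omega)(B)$ is a countably additive probability measure on $(S,\mathcal S)$, with $\omega \mapsto \Theta(\omega)$ measurable and $\mathcal{I}_\infty$-measurable as a map into the Radon probability measures of $S$. This should follow by applying the classical iid strong law under each directing measure $\nu$ --- where $P_n \to \nu$ weakly, $\nu^{\otimes\N}$-a.s., using that $\nu$ is Radon --- and then integrating against the disintegration supplied by $\rho$. Setting $\kappa(\omega, B) := \Theta(\omega)(B)$ then gives the regular conditional distribution required in the previous paragraph, and an appeal to \cref{thm:condiiddefinetti} completes the proof.

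I expect the main obstacle to be exactly this coherence step: promoting the separate-for-each-$B$ almost-sure convergence of $P_n(B)$ to convergence on one common null set with a countably additive limiting measure, and verifying that the limit is $\mathcal{I}_\infty$-measurable with the correct law. This is delicate because neither $\mathcal S$ nor $\mathcal{I}_\infty$ need be countably generated, so a bare monotone-class or martingale argument does not close the gap; it is precisely the Radon hypothesis, mediated through \cref{t.alam} and the measurably parametrized family of directing measures it provides (which reduces matters to the classical iid Glivenko--Cantelli situation), that should make the passage to the limit go through. An alternative route would be to prove directly --- bypassing \cref{t.alam} --- that a Radon marginal law forces $X_1$ to admit a regular conditional distribution given $\mathcal{I}_\infty$, but the non-countable-generatedness of $\mathcal S$ is the crux in that route as well.
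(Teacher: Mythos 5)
Your opening reduction is exactly the paper's: by \cref{thm:condiiddefinetti} it suffices to produce a regular conditional distribution of $X_1$ given $\mathcal I_\infty$ (or $\mathcal E_\infty$), and the Radon hypothesis must enter in that construction. But the construction you sketch does not go through, and the step you yourself flag as ``should follow'' is the entire content of the theorem, left unfilled. Concretely: (a) the target you set --- one null set off which $B\mapsto\lim_n P_n(\omega)(B)$ exists for \emph{all} Borel $B$ and is countably additive --- is both more than the definition of a regular conditional distribution requires (the exceptional set is allowed to depend on $B$) and in general unattainable, since the a.s.\ convergence of $P_n(B)$ comes with a $B$-dependent null set, there are uncountably many $B$, and the empirical measures are purely atomic, so their setwise limits cannot agree with an atomless directing measure on all Borel sets simultaneously; (b) the ingredient you call classical --- that under each directing $\nu$ the empirical measures converge weakly $\nu^{\otimes\N}$-a.s.\ --- is Varadarajan's theorem for separable metric spaces; for a Radon measure on an arbitrary Hausdorff space it needs exactly the tightness-plus-compactness machinery you are trying to bypass, and even granted, weak (A-topology) convergence does not yield $P_n(B)\to\nu(B)$ for arbitrary Borel $B$; (c) \cref{t.alam} is a statement about the law of $X$ on $S^{\N}$, and transporting a directing measure back to $(\Omega,\Sigma,\PR)$ as a kernel measurable for $\mathcal I_\infty$ is precisely the gap between mixed iid and conditionally iid (the paper recalls Freedman's counterexample showing this gap is real); moreover the set of sequences on which empirical measures converge in the A-topology is cut out by an uncountable family of conditions and need not be product measurable, so ``integrating against the disintegration $\rho$'' does not close it.

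For comparison, the paper's proof does not invoke \cref{t.alam} at all. It uses Radon-ness twice, constructively: tightness of $\distribution(X_1)$ plus the uniformity lemma (\cref{l.uniform}, proved from the exchangeable strong law \cref{prop:exchslln}) gives that for $\PR$-a.e.\ $\omega$ the family of empirical measures $(\mu_{\omega,n})_n$ is tight uniformly in $n$; Topsøe's compactness theorem (\cref{t.topsoe.radon}) then provides, for each such $\omega$, a subsequential A-topology limit $\mu_\omega$ that is a genuine Radon probability measure; and inner regularity of $\distribution(X_1)$ (compacts inside $B$ and inside $S\setminus B$, again combined with \cref{l.uniform}) upgrades A-topology convergence to $\lim_n\mu_{\omega,n}(B)=\mu_\omega(B)$ for each fixed Borel $B$ a.s. Identifying this limit with $\Ex[\indicate\{X_1\in B\}\mid\mathcal E_\infty]$ via \cref{prop:exchslln} gives the kernel, and \cref{thm:condiiddefinetti} finishes. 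To salvage your route you would have to supply an argument of this kind in place of the ``coherence step''; as written, the proposal is a plan whose essential construction is missing.
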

As we have already noted above,
at this level generality,
mixed iid does not entail conditional iid, as the counterexample due to Freedman \cite{freedman} shows.

In the above theorem, one might replace the hypothesis that
$\distribution(X_1)$ is 
Radon by the seemingly weaker hypothesis that
$\distribution(X_1)$ is tight
and outer regular on compact sets (see \cref{s.when.radon} for details). This does not, however, make the theorem stronger, since we show in \cref{p.when.radon} that
tightness and outer regularity on compact sets already imply Radon-ness of a probability measure.
Note, moreover, that if $S$ is a metric space, it is known that tightness of $\mu$ alone already implies that $\mu$ is Radon.

One of the reasons we are able to derive this strengthening of \cref{t.alam} from \cite{alam} 
is the generality of \cref{thm:condiiddefinetti}, providing a clear target (the conditional regular distribution of $X_1$) and allowing us to rephrase the question
of the existence of the underlying random measure as a matter of convergence of the sequence of empirical distributions.
Moreover, we show that Radon-ness already implies that the sequence of empirical distributions is almost surely tight.

\section{Conditionally IID De Finetti}
In this section, we provide necessary and sufficient conditions for the conditionally iid version of de Finetti's theorem. We begin by stating two versions of the strong law of large  numbers which will be instrumental in the proof. Both can be proved using elementary martingale methods.

Firstly, we need the following version of the strong law of large numbers for exchangeable sequences.
This result is standard for the exchangeable and tail $\sigma$-algebras; see, e.g., \citealt[][Theorem 12.17]{KlenkeAchim2020PTAC}. 
The result for the left-shift invariant $\sigma$-algebra follows since $\mathcal{I}_{\infty} \subseteq \mathcal{E}_{\infty}$, 
$$\lim_{n \to \infty}\frac{X_1 + \dots + X_n}{n} = \Ex[X_1|\mathcal{E}_{\infty}] \quad \text{a.s.}$$
and the left side is $\mathcal{I}_{\infty}$-measurable. 
 
\begin{theorem}\label{prop:exchslln}
    Let $(\Omega, \Sigma, \PR)$ be a probability space and $(X_n)_{n \in \N}$ an exchangeable sequence in $\mathcal{L}^1(\Omega, \Sigma, \PR)$. Let $\Sigma_{\infty}$ be one of $\mathcal{I}_{\infty}, \mathcal{T}_{\infty}, \mathcal{E}_{\infty}$. Then $$\frac{X_1 + \dots + X_n}{n} \to \Ex[X_1|\Sigma_{\infty}] \quad \text{a.s.}$$
\end{theorem}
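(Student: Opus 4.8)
The plan is to establish the law of large numbers for $\Sigma_\infty = \mathcal E_\infty$ by a reverse martingale argument, and then obtain the cases $\Sigma_\infty = \mathcal T_\infty$ and $\Sigma_\infty = \mathcal I_\infty$ by identifying the limit and comparing the three $\sigma$-algebras.

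For each $n \in \N$, let $\mathcal E_n \subseteq \Sigma$ be the $\sigma$-algebra of events invariant under every permutation of the first $n$ coordinates of $X$; these form a decreasing sequence with $\bigcap_{n \in \N} \mathcal E_n = \mathcal E_\infty$. The point is that exchangeability makes $\Ex[X_i \mid \mathcal E_n]$ independent of $i \in \{1, \dots, n\}$, so averaging over $i$ and using that $(X_1 + \dots + X_n)/n$ is $\mathcal E_n$-measurable gives
\[
\Ex[X_1 \mid \mathcal E_n] \;=\; \Ex\!\left[\frac{X_1 + \dots + X_n}{n} \,\Big|\, \mathcal E_n\right] \;=\; \frac{X_1 + \dots + X_n}{n}.
\]
Hence $\big(\Ex[X_1 \mid \mathcal E_n]\big)_{n \in \N}$ is a reverse martingale for the decreasing filtration $(\mathcal E_n)_n$, and since $X_1 \in \mathcal L^1$, the reverse martingale convergence theorem gives convergence almost surely (and in $\mathcal L^1$) to $\Ex[X_1 \mid \bigcap_n \mathcal E_n] = \Ex[X_1 \mid \mathcal E_\infty]$. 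This is the assertion for $\mathcal E_\infty$.

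Now set $Y := \lim_n (X_1 + \dots + X_n)/n$, which exists almost surely by the previous step. Changing finitely many coordinates of $X$ does not change $Y$, so $Y$ is $\mathcal T_\infty$-measurable; and $Y$ is invariant under the left shift, so it is $\mathcal I_\infty$-measurable. Moreover $\mathcal T_\infty \subseteq \mathcal E_\infty$ (a tail event lies in $\sigma(X_{n+1}, X_{n+2}, \dots) \subseteq \mathcal E_n$ for every $n$) and $\mathcal I_\infty \subseteq \mathcal E_\infty$ (as already noted before the theorem). Therefore, for $\Sigma_\infty$ equal to $\mathcal T_\infty$ or $\mathcal I_\infty$, the tower property yields
\[
\Ex[X_1 \mid \Sigma_\infty] \;=\; \Ex\big[\Ex[X_1 \mid \mathcal E_\infty] \,\big|\, \Sigma_\infty\big] \;=\; \Ex[Y \mid \Sigma_\infty] \;=\; Y,
\]
because $Y$ is already $\Sigma_\infty$-measurable; combined with $Y = \lim_n (X_1 + \dots + X_n)/n$ this finishes all three cases.

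The only step that requires care is the identity $\Ex[X_1 \mid \mathcal E_n] = (X_1 + \dots + X_n)/n$: it hinges on the fact that finite exchangeability renders $\Ex[X_i \mid \mathcal E_n]$ the same for all $i \le n$, which uses that conditioning on a $\sigma$-algebra invariant under permutations of the first $n$ coordinates is unaffected by permuting those coordinates. Everything else—reverse martingale convergence, the tower property, and the elementary containments among $\mathcal I_\infty$, $\mathcal T_\infty$, $\mathcal E_\infty$—is routine. Alternatively, one may simply quote the classical statement for $\mathcal E_\infty$ and $\mathcal T_\infty$ (e.g.\ \cite[Theorem~12.17]{KlenkeAchim2020PTAC}) and carry out only the comparison argument of the third paragraph for $\mathcal I_\infty$.
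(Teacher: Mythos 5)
Your proof is correct, and it is more self-contained than the paper's. The paper does not reprove the strong law for exchangeable sequences: it simply cites the standard result (\cite[Theorem~12.17]{KlenkeAchim2020PTAC}) for the exchangeable and tail $\sigma$-algebras, and then disposes of the $\mathcal{I}_{\infty}$ case exactly as you do in your third paragraph, by noting $\mathcal{I}_{\infty} \subseteq \mathcal{E}_{\infty}$ and that the limit $Y$ is $\mathcal{I}_{\infty}$-measurable. You instead prove the core $\mathcal{E}_{\infty}$ case from scratch via the reverse-martingale identity $\Ex[X_1 \mid \mathcal{E}_n] = (X_1+\dots+X_n)/n$ (which is in fact the argument behind the cited theorem), and then treat $\mathcal{T}_{\infty}$ and $\mathcal{I}_{\infty}$ uniformly by the measurability-plus-tower comparison; your closing remark that one could instead quote the classical statement and only do the comparison for $\mathcal{I}_{\infty}$ is precisely the paper's route. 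What your version buys is independence from the reference and a single mechanism handling both smaller $\sigma$-algebras; what it costs is a couple of standard details you assert rather than verify, namely that $\bigcap_n \mathcal{E}_n = \mathcal{E}_{\infty}$ with the paper's pullback definitions (one needs, e.g., to pass to $\limsup_n A_n$ of the witnessing permutation-invariant sets), and that $Y$ is genuinely $\mathcal{T}_{\infty}$- and $\mathcal{I}_{\infty}$-measurable, which is cleanest via the representation $Y = \limsup_n \frac{1}{n}\sum_{i=k}^{n} X_i$ for every $k$. These are routine and do not affect correctness.
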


Secondly, we need a version of the strong law of large numbers for conditionally iid sequences.
For a proof, see, e.g., \citealt[][Theorem~7]{PrakasaRaoB.L.S.2009Cicm}.

\begin{theorem}\label{prop:condiidslln}
    Let $(\Omega, \Sigma, \PR)$ be a probability space and $(X_n)_{n \in \N}$ a sequence in $\mathcal{L}^1(\Omega, \Sigma, \PR)$ that is bounded by some $K \in [0, \infty)$ and that is conditionally iid given $\mathcal{G} \subseteq \Sigma$ a sub $\sigma$-algebra. Then 
    $$\frac{X_1 + \dots + X_n}{n} \to \Ex[X_1|\mathcal{G}] \quad \text{a.s.}$$
\end{theorem}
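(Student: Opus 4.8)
The plan is to route through \cref{prop:exchslln}: first observe that a conditionally iid sequence is exchangeable, deduce from \cref{prop:exchslln} that the Ces\`aro averages $S_n/n$, where $S_n = X_1 + \dots + X_n$, converge almost surely to \emph{some} limit, and then identify that limit with $\Ex[X_1|\mathcal G]$ by a short second-moment computation. For the first point: if $\sigma$ is a permutation of $\{1,\dots,k\}$ and $f_1,\dots,f_k$ are bounded and measurable, then, using conditional independence given $\mathcal G$ together with the fact that the $X_i$ share a common conditional distribution given $\mathcal G$,
\begin{align*}
\Ex\Big[\prod_{i=1}^k f_i(X_{\sigma(i)})\Big]
&= \Ex\Big[\prod_{i=1}^k \Ex[f_i(X_{\sigma(i)})|\mathcal G]\Big]
= \Ex\Big[\prod_{i=1}^k \Ex[f_i(X_i)|\mathcal G]\Big] \\
&= \Ex\Big[\prod_{i=1}^k f_i(X_i)\Big],
\end{align*}
so the finite-dimensional distributions are permutation invariant. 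Since $\abs{X_n}\le K$ the sequence lies in $\mathcal L^1$, so \cref{prop:exchslln} applies and yields a random variable $Y$, necessarily satisfying $\abs{Y}\le K$, with $S_n/n \to Y$ almost surely and hence, by dominated convergence, in $\mathcal L^1$.

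It remains to show $Y = \Ex[X_1|\mathcal G]$ almost surely. Write $M = \Ex[X_1|\mathcal G]$ and $V = \Ex[X_1^2|\mathcal G]$, both integrable since the $X_i$ are bounded. Conditional independence gives $\Ex[X_iX_j|\mathcal G] = M^2$ for $i\ne j$, while the common conditional law gives $\Ex[X_i|\mathcal G] = M$ and $\Ex[X_i^2|\mathcal G] = V$ for every $i$. Expanding $S_n^2$ and taking first conditional and then unconditional expectations yields $\Ex[(S_n/n)^2] = \tfrac1n\Ex[V] + \tfrac{n-1}{n}\Ex[M^2]$ and $\Ex[(S_n/n)M] = \Ex[M^2]$, hence
\begin{equation*}
\Ex\big[(S_n/n - M)^2\big] = \tfrac1n\,\Ex\big[V - M^2\big] = \tfrac1n\,\Ex\big[\var(X_1|\mathcal G)\big] \;\longrightarrow\; 0 .
\end{equation*}
So $S_n/n \to M$ in $\mathcal L^2$, hence in $\mathcal L^1$; comparing with the $\mathcal L^1$ limit $Y$ gives $\Ex\abs{Y - M} = 0$, i.e.\ $S_n/n \to \Ex[X_1|\mathcal G]$ almost surely, which is the claim.

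I do not expect a genuine obstacle here once \cref{prop:exchslln} is granted; the one point that needs care is to use only the measure-theoretic content of ``conditionally iid'' --- conditional independence plus equality of the conditional one-dimensional distributions --- so that the conditional-expectation manipulations above are legitimate with no regularity hypothesis on $S$ or on $\distribution(X_1)$. If one instead wanted a self-contained proof bypassing \cref{prop:exchslln} (as in the cited reference), the almost sure convergence would come from recognizing $S_n/n = \Ex[X_1|\mathcal G_n]$ for the \emph{decreasing} filtration $\mathcal G_n = \sigma\big(\mathcal G,\ \text{the symmetric functions of } X_1,\dots,X_n,\ X_{n+1}, X_{n+2},\dots\big)$ and invoking reverse-martingale convergence; on that route the delicate step is a conditional form of the Hewitt--Savage zero--one law, i.e.\ propagating conditional independence from the $\mathcal G_n$ to $\sigma(\bigcup_n \mathcal G_n)$ by a monotone-class argument, which then forces $Y$ to be $\mathcal G$-measurable and equal to $\Ex[X_1|\mathcal G]$.
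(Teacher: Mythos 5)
Your proof is correct, and it takes a genuinely different route from the paper's treatment: the paper does not prove \cref{prop:condiidslln} at all, but delegates it to the literature (Prakasa Rao, Theorem~7) with the remark that it follows by elementary martingale methods. You instead observe that conditional iid-ness implies exchangeability (your computation with bounded measurable test functions is legitimate in full generality, since conditional independence and equality of the conditional marginals extend from indicators to bounded measurable functions by the functional monotone class theorem and conditional monotone convergence --- exactly the care the paper's setting requires), then invoke \cref{prop:exchslln} to obtain an almost sure, and by boundedness also $\mathcal L^1$, limit of $S_n/n$, and finally identify that limit with $\Ex[X_1\mid\mathcal G]$ via the second-moment identity $\Ex\big[(S_n/n - \Ex[X_1\mid\mathcal G])^2\big] = n^{-1}\,\Ex[\var(X_1\mid\mathcal G)] \to 0$, which is valid because the variables are bounded, hence square-integrable, and $\Ex[X_1\mid\mathcal G]$ is bounded and $\mathcal G$-measurable. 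What your route buys is a short proof that is self-contained within the paper's own toolkit, since \cref{prop:exchslln} is needed anyway, at the price of using the boundedness hypothesis essentially (for the $L^2$ step) --- a hypothesis the statement grants, and which is all the paper ever uses (it only applies the result to indicator sequences). The route the paper points to, which you sketch at the end, instead realizes $S_n/n$ as a reverse martingale with respect to a decreasing filtration containing $\mathcal G$ and requires a conditional Hewitt--Savage-type argument to identify the limit; that version works for merely integrable conditionally iid sequences, but your argument avoids that machinery entirely.
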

With these two theorems at our disposal, we can now begin with the proof of our main theorem.

\begin{proof}[Proof of \cref{thm:condiiddefinetti}.]
    First, we show that \cref{item:DFcond1} implies \cref{item:DFcond2}. Let $\kappa$ be a regular conditional distribution of $X_1$ given $\Sigma_{\infty}$. In particular, $\kappa$ is a Markov kernel from $(\Omega, \Sigma_{\infty})$ to $(S, \mathcal{S})$. For any $A \in \mathcal{S}$, $\left(\indicate\{X_n \in A\}\right)_{n \in \N}$ is an exchangeable sequence in $\mathcal{L}^1(\Omega, \Sigma, \PR)$. Therefore, \cref{prop:exchslln} implies that, for $\PR$-almost all $\omega \in \Omega$,
\begin{equation} \label{eqn:DFdef}
\begin{split}
    \lim_{n \to \infty} \frac{\indicate\{X_1(\omega) \in A\} + \dots + \indicate\{X_n(\omega) \in A\}}{n} &=
     \Ex[\indicate\{X_1 \in A\}|\Sigma_{\infty}](\omega) 
     \\ &= \kappa(\omega, A).
    \end{split}
\end{equation}
Let $\mathcal{G}$ be any of $\mathcal{I}_{\infty}, \mathcal{T}_{\infty}, \mathcal{E}_{\infty}$ in $(2)$. We claim that $\kappa^{\infty}$ is a regular conditional distribution of $X := (X_n)_{n \in \N}$ given $\mathcal{G}$. 
For every $B \in \mathcal{S}^{\infty}$,
it suffices to show that, for $\PR$-almost all $\omega \in \Omega$,
$    \kappa^{\infty}(\omega, B) = \Ex[\indicate\{X \in B\}|\mathcal{E}_{\infty}](\omega).
$
Indeed, it is sufficient to show 
$$\int_E \kappa^{\infty}(\omega,B) \;\PR(d\omega) = \int_{E}\indicate\{X(\omega) \in B\}\;\PR(d\omega) $$
for $B = A_1 \times \dots \times A_m \times \mathcal{S}^{\infty}$ where $m \in \N$, $A_1, \dots, A_m \in \mathcal{S}$, and $E \in \mathcal{G}$. We have
\begin{align}
    &\int_{E}\kappa^{\infty}(\omega,B) \;\PR(d\omega) %
    = \int_{E}\prod_{i = 1}^m\kappa(\omega, A_i) \;\PR(d\omega) \notag \displaybreak[0]\\
    &= \int_{E}\prod_{i = 1}^m \lim_{n \to \infty} \frac{\indicate\{X_1(\omega) \in A_i\} + \dots + \indicate\{X_n(\omega) \in A_i\}}{n}\;\PR(d\omega) \label{eqn:DF1} \notag \displaybreak[0]\\
     &= \int_{E}\lim_{n \to \infty}\sum_{j_1, \dots , j_m = 1}^n  \prod_{i = 1}^m\frac{\indicate\{X_{j_i}(\omega) \in A_i\}}{n}\;\PR(d\omega) \displaybreak[0]\\
     &= \lim_{n \to \infty}\sum_{j_1, \dots , j_m = 1}^n\int_{E}\prod_{i = 1}^m\frac{\indicate\{X_{j_i}(\omega) \in A_i\}}{n}\;\PR(d\omega) \label{eqn:DF2}\displaybreak[0]\\
     &= \lim_{n \to \infty}\sum_{\{j_1, \dots , j_m \} = [1,\dots, n]}\int_{E}\prod_{i = 1}^m\frac{\indicate\{X_{j_i}(\omega) \in A_i\}}{n}\;\PR(d\omega) \label{eqn:DF3}\displaybreak[0]\\
     &= \lim_{n \to \infty}\frac{n(n-1)\dots (n-m+1)}{n^m}\;\PR(E, X_1 \in A_1, \dots , X_m \in A_m) \label{eqn:DF4}\displaybreak[0]\\
     &= \PR(E, X_1 \in A_1, \dots , X_m \in A_m) \notag \displaybreak[0]\\
     &= \int_E \indicate\{X(\omega) \in B\}\;\PR(d\omega). \notag
\end{align}
\cref{eqn:DF1} follows because \cref{eqn:DFdef} holds almost surely and since the union of $m$ measure zero sets has measure zero. \cref{eqn:DF2} follows by linearity and the Dominated Convergence Theorem (all terms are bounded by $1$). \cref{eqn:DF3} follows from 
\begin{align*}
    0 &\leq \lim_{n \to \infty}\sum_{\{j_1, \dots, j_m\} \neq [1,\dots, n]}\int_{E}\prod_{i = 1}^m\frac{\indicate\{X_{j_i}(\omega) \in A_i\}}{n}\;\PR(d\omega) \\
    &\leq \lim_{n \to \infty}\sum_{\{j_1, \dots, j_m\} \neq [1,\dots, n]} \frac{1}{n^m} \\
    &= \lim_{n \to \infty} \frac{n^m-n(n-1) \dots (n-m+1)}{n^m} \\
    &= 0.
\end{align*}
\cref{eqn:DF4} follows from exchangeability and since $E \in \mathcal{G} \subseteq \mathcal{E}_{\infty}$. Therefore $(X_n)_{n \in \N}$ is conditionally iid given $\mathcal{G}$. \\

Next, we show that \cref{item:DFcond2} implies \cref{item:DFcond1}. Suppose that $(X_n)_{n \in \N}$ is conditionally iid given some sub $\sigma$-algebra $\mathcal{G} \subseteq \Sigma$. Let $\kappa$ be a regular conditional distribution of $X_1$ given $\mathcal{G}$. In particular, $\kappa$ is a Markov kernel from $(\Omega, \mathcal{G})$ to $(S, \mathcal{S})$. Fix $A \in \mathcal{S}$. Then $(\indicate\{X_n \in A\})_{n \in \N}$ are conditionally iid given $\mathcal{G}$. By \cref{prop:condiidslln}, 
$$\frac{\indicate\{X_1(\omega) \in A\} + \dots + \indicate\{X_n(\omega) \in A\}}{n} \to \Ex[\indicate\{X_1 \in A\}|\mathcal{G}](\omega) = \kappa(\omega, A) \quad \text{a.s.}$$
\cref{prop:exchslln} implies that 
$$\frac{\indicate\{X_1(\omega) \in A\} + \dots + \indicate\{X_n(\omega) \in A\}}{n} \to \Ex[\indicate\{X_1 \in A\}|\Sigma_{\infty}](\omega) \quad \text{a.s.}$$
Therefore $\PR$-almost surely 
$$\kappa(\omega, A) = \Ex[\indicate\{X_1 \in A\}|\Sigma_{\infty}](\omega).$$
Since this holds for all $A \in \mathcal{S}$, $\kappa$ is a regular conditional distribution of $X_1$ given $\Sigma_{\infty}$.
\end{proof}
\begin{remark}
    The sub $\sigma$-algebra $\mathcal{G}$ in \cref{item:DFcond2}  above can be chosen to be either of $\mathcal{I}_{\infty}, \mathcal{T}_{\infty}, \mathcal{E}_{\infty}$.
\end{remark}

\section{Marginally Radon processes}

In this section we prove our sufficient condition for a process to be conditionally iid, \cref{t.sufficient}.
%
%

%
%
Let us introduce some notation which we will be using in the theorems and proofs throughout this section.
Given a sequence $X = (X_n)_{n \in \N}$ of random elements 
$(\Omega,\Sigma) \to (S,\mathcal S)$, where $(\Omega,\Sigma, \PR)$ is the basic probability space,
let us use the following short-hand:
For $\omega \in \Omega$, $n\in\N$, and $B\in\mathcal S$, write
\[
\mu_{\omega,n}(B) := \frac1n \sum_{i<n} \indicate(X_i(\omega) \in B). 
\]

We write $\prob(\Omega)$ for the set of probability measures on the measurable space $(\Omega, \Sigma)$
and $\radonprob(S)$ for the set of Radon propability measures on $S$, where $S$ is a topological space.
We write $\Borel(S)$ for the Borel $\sigma$-algebra on $S$.

When we speak of a net $(\mu_\lambda)_{\lambda \in \Lambda}$, it is understood that $\Lambda$ is a directed set whose ordering will either go unmentioned or can be inferred from context (namely, 
we will only use $\Lambda = \Nats$ except in the upcoming definition of the A-topology and in the appendix).

The \inlinedfn{A-topology} on $\prob(S)$ is defined as follows:
A net $(\mu_\lambda)_{\lambda \in \Lambda}$ converges to $\mu$
if and only if 
for every closed $F \subseteq S$, 
\[
\limsup_\lambda \mu_\lambda(F) \leq \mu(F).
\]

We shall use the following theorem about convergence of measures:
\begin{theorem}\label{t.topsoe.radon}
Suppose $S$ is a Hausdorff topological space and
$\{\mu_n \setdef n\in\N\} \subseteq \radonprob(S)$.
Further suppose that
$\{\mu_n \setdef n\in\N\}$ is tight.
Then every sequence from $\{\mu_n \setdef n\in\N\}$
has a subsequence which converges in $\radonprob(S)$ with respect to the A-topology.
\end{theorem}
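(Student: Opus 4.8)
The statement is a compactness result for tight families of Radon measures in the A-topology (sometimes called the "weak topology generated by closed sets" or Topsøe's topology). This is essentially Topsøe's theorem, and the plan is to prove it by the standard route: first reduce to a countable family, then use tightness to extract a subsequence along which the masses of a suitable countable collection of compact sets converge, then define the candidate limit measure as a set function on compact sets and use inner regularity (Radon-ness) to extend it to a Radon Borel measure, and finally verify A-topology convergence. Let me spell this out.

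First I would fix a sequence $(\mu_{n_k})_k$ from the family; by relabeling we may assume it is $(\mu_n)_n$. By tightness, for each $j \in \N$ there is a compact $K_j \subseteq S$ with $\mu_n(S \setminus K_j) < 1/j$ for all $n$. Let $\mathcal K$ be the collection of all finite unions of sets from $\{K_j : j \in \N\}$; this is a countable family of compact sets, directed under inclusion, with $\sup_{K \in \mathcal K} \inf_n \mu_n(K) = 1$. Now I would like to pass to a subsequence along which $\lim_n \mu_n(K)$ exists for every $K \in \mathcal K$ — but convergence of masses on compacta alone is not enough to pin down a measure, so instead the right move is to control a countable algebra-like structure: work inside $K := \bigcup_j K_j$ or, more robustly, consider the lattice of open sets. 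The cleanest version: since each $\mu_n$ restricted to (a metrizable-enough piece of) the space need not be separable in general, one instead argues directly with the A-topology. The key tool is that the A-topology on a tight, uniformly bounded set of Radon measures is determined by, and compact because of, the behavior on compact sets: one defines, for the subsequence, the set function $\nu(U) = \sup\{\liminf_k \mu_{n_k}(V) : V \text{ open}, \overline{V} \text{ compact}, \overline V \subseteq U\}$ on open $U$ (after passing to a subsequence making the relevant $\liminf$'s into limits along a cofinal countable selection of open sets witnessing tightness), show $\nu$ is a Radon outer content, and let $\mu$ be the induced Radon measure.

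More concretely, the steps in order: (1) extract $K_j$ from tightness and form the countable directed family $\mathcal K$; (2) by a diagonal argument, pass to a subsequence so that $a(K) := \lim_k \mu_{n_k}(K)$ exists for all $K \in \mathcal K$, and note $\sup_K a(K) = 1$; (3) define $\mu^*(F) = \inf\{a(K) + \text{(slack)} : \dots\}$ — more precisely define an inner content on all compact sets by $\mu_*(C) = \inf\{\limsup_k \mu_{n_k}(V) : V \supseteq C \text{ open}\}$ and simultaneously an outer set function, check they agree enough to yield a finitely additive, tight premeasure on the compact sets; (4) invoke the Riesz-type / Topsøe extension theorem to get a Radon measure $\mu \in \radonprob(S)$ with $\mu(C) = \mu_*(C)$ for compact $C$ and total mass $1$ (total mass is $1$ because of step (2) and tightness); (5) verify convergence in the A-topology: for closed $F$, use that $\mu_{n_k}(F) \le \mu_{n_k}(F \cap K_j) + 1/j$ and that $F \cap K_j$ is compact, together with outer regularity of $\mu_{n_k}$ on the compact $F \cap K_j$ by open sets, to get $\limsup_k \mu_{n_k}(F) \le \mu(F) $ up to $1/j$ for every $j$, hence $\limsup_k \mu_{n_k}(F) \le \mu(F)$.

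The main obstacle is step (3)–(4): carefully constructing the limiting Radon measure from masses on a countable family of compact sets, i.e., showing the candidate set function is a genuine tight content that extends to a Radon Borel measure of full mass $1$ — this requires being careful about finite additivity and the interplay between inner approximation by $\mathcal K$ and the A-topology's control from outside by closed sets. I expect this to be where Topsøe's machinery (or a lemma like "a tight finitely additive premeasure on compacta extends uniquely to a Radon measure") does the real work, and I would cite or reprove that extension lemma rather than grind it out inline. The verification in step (5) is then routine given tightness and Radon-ness of both the $\mu_{n_k}$ and the limit $\mu$. (Note: only subsequential compactness is claimed, not metrizability of the A-topology, so no further diagonalization beyond step (2) is needed.)
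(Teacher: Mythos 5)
Your route is genuinely different from the paper's, and the part you defer is exactly where it breaks. The paper does not construct the limit at all: it states Topsøe's general compactness criterion (\cref{t.topsoe}) and merely verifies its hypotheses, taking $\mathcal K$ to be the compact and $\mathcal G$ the open subsets of $S$, noting that Radon-ness places the (suitably adjusted) measures in $\measreg(S;\mathcal K)$ and that tightness gives condition (ii); all the hard work is delegated to the cited theorem, whose conclusion is that every subnet has an A-convergent further subnet. Your plan instead builds the limit by hand, and steps (3)--(4) do not go through as sketched. The diagonalization in step (2) controls $\lim_k \mu_{n_k}(K)$ only on the countable family generated by the tightness compacta, whereas the contents you then define involve $\limsup_k \mu_{n_k}(V)$ over \emph{arbitrary} open $V$; a $\limsup$ along a fixed subsequence is subadditive but not superadditive, so additivity of $\mu_*$ on disjoint compacta is not available, and no countable diagonal extraction can make all the needed limits exist simultaneously --- this is precisely what forces subnets/ultrafilter limits in Topsøe's machinery. (Your first candidate, the supremum over open $V$ with $\overline V$ compact, is moreover vacuous whenever $S$ is not locally compact.)

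The obstruction to your subsequence extraction is not merely technical. For probability measures, A-convergence forces $\int f\,d\mu_{n_k}\to\int f\,d\mu$ for every bounded continuous $f$ (apply the closed-set bound to the level sets $\{f\ge t\}$ and the complementary open-set bound to $1-f$). Now take $S=\beta\N$ and $\mu_n=\delta_n$: this is a tight family of Radon probability measures, yet for any subsequence $(n_k)$ one may choose $f\in\ell^\infty=C(\beta\N)$ oscillating along $(n_k)$, so no subsequence A-converges to any probability measure. Hence the subsequence form you aim at cannot be reached by your construction in this generality; what Topsøe's criterion --- and the paper's appendix proof --- actually deliver is the net statement (relative compactness, i.e.\ existence of a Radon A-limit point along a subnet), and that is the substance used downstream in \cref{t.sufficient}. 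If you want to keep your hands-on approach, the workable version is the net one: fix an ultrafilter limit $a(K)$ of $\mu_n(K)$ for \emph{all} compact $K$ at once, check that $a$ is a monotone, additive, tight compact-regular content of mass $1$ (mass $1$ uses tightness), extend it to a measure in $\radonprob(S)$ by the extension theorem for such contents, and verify the closed-set inequality along the ultrafilter via $F\cap K_j$; that amounts to reproving the special case of \cref{t.topsoe} that the paper cites, rather than the subsequential claim of \cref{t.topsoe.radon}.
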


This theorem can be seen, for instance, as a corollary to a very general, measure theoretic theorem of
Topsøe (see \cite{topsoe}; be aware that this author refers to what is here called A-topology as the \emph{w-topology}).
Due to the technical nature of the proof, we relegate it to the appendix (see \cref{appendix}).

A crucial steps in the proof of \cref{t.sufficient} is encapsulated in the technical \cref{l.uniform}.
But first, let us prove a preparatory lemma.
\begin{lemma}\label{l.bound}
Suppose $X$ is a random element in the measurable space $(S,\mathcal S)$, defined on the probability space $(\Omega,\Sigma,\PR)$.
Suppose further that 
$B \in \mathcal S$, $\mathcal G$ is a sub-$\sigma$-algebra of $\Sigma$ and $\omega \mapsto Y(\omega)$ is a version of 
$\Ex\big(\indicate(X \in B) \vert \mathcal G\big)$.
Then 
\[
\PR(X \in B) \leq \varepsilon^2 \Rightarrow \PR\big(\{\omega\in\Omega \setdef Y(\omega) < \varepsilon  \}\big) \geq 1-\varepsilon
\]
\end{lemma}
\begin{proof}
Let $B \in \mathcal S$ with $\PR(X \in B) \leq \varepsilon^2$ be given.
Writing
\[
F = \{\omega\in\Omega \setdef Y(\omega) \geq \varepsilon\}
\]
suppose towards a contradiction that
$\PR(\Omega\setminus F) < 1 -\varepsilon$, or equivalently, $\PR(F) > \varepsilon$.
Then
\[
\varepsilon^2 \geq \PR(X \in B) = \int Y(\omega) \; \mathrm d \PR(\omega)
\geq \int_{F} \varepsilon \; \mathrm d \PR(\omega) = \varepsilon \PR(F) > \varepsilon^2,
\]
a contradiction.
\end{proof}
This brings us to the crucial lemma.
\begin{lemma}\label{l.uniform}
Suppose $X = (X_n)_{n\in\N}$ is an exchangeable process on the probability space $(\Omega,\Sigma,\PR)$
taking values in the measurable space $(S,\mathcal S)$.
Suppose further that $(B_m)_{m\in\N}$ is a sequence from $\mathcal S$ such that
$\PR(X_1 \in B_m) \to 0$ as $m\to \infty$.
Then
\[
(\forall^\PR \omega \in \Omega)(\forall \varepsilon \in \NNReals) (\exists m \in \N) (\forall n\in\N) \; \mu_{\omega,n}(B_m) < \varepsilon.
\]
\end{lemma}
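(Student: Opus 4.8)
The plan is to reduce the almost-sure conclusion to countably many events, one for each tolerance $\varepsilon = 1/j$, and to control each of them by a single maximal inequality bounding $\sup_{n}\mu_{\cdot,n}(B_m)$ in terms of $\PR(X_1 \in B_m)$; since the latter tends to $0$ by hypothesis, each such event will have probability one. Concretely, for fixed $\omega$ the property $(\exists m)(\forall n)\, \mu_{\omega,n}(B_m) < \varepsilon$ is monotone in $\varepsilon$, so the conclusion is equivalent to $\PR\bigl(\bigcap_{j \in \N} A_j\bigr) = 1$, where $A_j := \{\omega : (\exists m)(\forall n)\ \mu_{\omega,n}(B_m) < 1/j\}$; all sets in sight lie in $\Sigma$ since each $\omega \mapsto \mu_{\omega,n}(B)$ is $\Sigma$-measurable. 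Hence it suffices to show $\PR(A_j) = 1$ for every $j$.

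The crux is the maximal inequality
\[
\PR\Bigl(\sup_{n} \mu_{\cdot,n}(B) \ge \lambda \Bigr) \le \frac{\PR(X_1 \in B)}{\lambda}, \qquad B \in \mathcal S,\ \lambda > 0.
\]
To prove it I would put $\xi_i := \indicate(X_i \in B)$ — an exchangeable $\{0,1\}$-valued sequence — and use the classical backward-martingale structure of its empirical averages $\bar\xi_n := \frac{1}{n}\sum_{i=1}^{n}\xi_i$: with $\mathcal E_n := \sigma(\xi_1 + \dots + \xi_n, \xi_{n+1}, \xi_{n+2}, \dots)$ one has $\mathcal E_{n+1} \subseteq \mathcal E_n$ and, by exchangeability, $\Ex[\xi_1 \mid \mathcal E_n] = \bar\xi_n$. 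Thus for each fixed horizon $N$ the reversed finite sequence $(\bar\xi_N, \bar\xi_{N-1}, \dots, \bar\xi_1)$ is a nonnegative martingale with respect to $\mathcal E_N \subseteq \dots \subseteq \mathcal E_1$, and Doob's maximal inequality gives $\PR(\max_{1 \le n \le N} \bar\xi_n \ge \lambda) \le \Ex[\bar\xi_1]/\lambda = \PR(X_1 \in B)/\lambda$, a bound independent of $N$; letting $N \to \infty$ yields the displayed inequality for $\bar\xi$, and since $\mu_{\cdot,n}(B) \le \bar\xi_n$ up to the harmless index shift in the definition of $\mu_{\cdot,n}$, it transfers to $\mu_{\cdot,n}(B)$.

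Granting this, I would finish as follows. Fix $j$ and set $\varepsilon = 1/j$. For each $m$ the event $E_m := \{(\forall n)\ \mu_{\cdot,n}(B_m) < \varepsilon\}$ contains $\{\sup_n \mu_{\cdot,n}(B_m) < \varepsilon\}$, so the maximal inequality gives $\PR(E_m) \ge 1 - \PR(X_1 \in B_m)/\varepsilon$. Since $A_j \supseteq E_m$ for every $m$ and $\PR(X_1 \in B_m) \to 0$, we obtain $\PR(A_j) \ge \sup_m \PR(E_m) = 1$. Intersecting over $j$ completes the proof.

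I expect the only genuinely delicate point to be the maximal inequality — recognizing the backward-martingale structure of the empirical averages and orienting the filtration so that Doob's inequality applies with a bound uniform in the horizon $N$. Everything else is routine bookkeeping with countable unions and intersections. One could alternatively combine \cref{prop:exchslln} with \cref{l.bound} to see that the \emph{limit} $\lim_n \mu_{\cdot,n}(B_m)$ is small with high probability once $\PR(X_1 \in B_m)$ is small; but that controls only the tail of the sequence in $n$, whereas the conclusion asks for a bound at \emph{every} $n$, so the maximal inequality appears unavoidable.
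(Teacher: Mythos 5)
Your proof is correct, but it takes a genuinely different route from the paper's. You obtain uniformity in $n$ in one stroke from the maximal inequality $\PR\bigl(\sup_n \mu_{\cdot,n}(B) \ge \lambda\bigr) \le \PR(X_1 \in B)/\lambda$, proved by recognizing that $\bar\xi_n = \Ex[\indicate\{X_1\in B\}\mid\mathcal E_n]$ for the decreasing filtration $\mathcal E_n = \sigma(\xi_1+\dots+\xi_n,\xi_{n+1},\dots)$ and applying Doob's inequality to the reversed finite martingale; the identity $\Ex[\xi_1\mid\mathcal E_n]=\bar\xi_n$ does hold for exchangeable sequences by the usual symmetrization argument, the reversed orientation is handled correctly, and the passage $N\to\infty$, the index shift in $\mu_{\omega,n}$, and the reduction to $\varepsilon=1/j$ are all sound. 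The paper instead works only with the almost-sure \emph{limits} $\lim_n\mu_{\omega,n}(B_m)$, identified via \cref{prop:exchslln} and controlled by the Chebyshev-type \cref{l.bound}, and then recovers uniformity in $n$ by a combinatorial patching argument: it passes to a subsequence with $\PR(X_1\in B_m)<\varepsilon_m^2$, arranges the $B_m$ to be $\subseteq$-decreasing, uses the limit to handle all $n$ beyond some $\bar n$, and handles the finitely many $n<\bar n$ by moving to still smaller sets $B_{m'}$ via a per-$n$ Markov bound. So your closing remark that a maximal inequality ``appears unavoidable'' is not quite right --- the paper avoids it precisely through this decreasing-sets trick --- but your argument is shorter, needs no subsequence or monotonicity assumptions on $(B_m)$, and yields an explicit quantitative bound on $\sup_n\mu_{\cdot,n}(B_m)$, at the price of invoking the backward-martingale machinery explicitly rather than only through the quoted strong law.
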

\begin{proof}
Fix an arbitrary decreasing sequence $(\varepsilon_m)_{m\in\N}$ of positive real numbers converging to zero.
By passing from $(B_m)_{m \in \Nats}$ to a subsequence if necessary, we may assume
that for each $m \in \N$,
\[
\PR(X_1 \in B_m) < {\varepsilon_m}^2.
\]
We may moreover assume that $(B_m)_{m\in\N}$ is $\subseteq$-decreasing.
Write
\[
F_m = \{\omega\in\Omega \setdef \lim_n \mu_{\omega,n}(B_m) < \varepsilon_m\}
\]
noting that the limit exists for $\PR$-almost all $\omega$.
Since $\omega \mapsto \lim_n \omega_{\omega,n}(B_m)$ is a version of
$\Ex\big(\indicate(X \in B_m) \vert \mathcal{E}_\infty\big)$ by the strong law of large numbers for exchangeable sequences (\cref{prop:exchslln})
we know by the previous lemma (\cref{l.bound})
that
\[
\PR(F_m) \geq 1 - \varepsilon_m
\]
and therefore,
\begin{equation}
\PR\left(\bigcap_{m \in \nat} \bigcup_{m' \geq m} F_{m'}\right) = 1.
\end{equation}
This means that for $\PR$-amost all $\omega \in \Omega$,
\begin{equation}\label{e.limit}
(\forall m \in \N) (\exists m' \geq m) \;  \lim_n \mu_{\omega,n}(B_{m'}) < \varepsilon_{m'}.
\end{equation}
Likewise for each $n\in\N$, a similar argument shows that for $\PR$-amost all $\omega \in \Omega$,
\begin{equation}\label{e.n}
(\forall m \in \N) (\exists m' \geq m) \;  \mu_{\omega,n}(B_{m'}) < \varepsilon_{m'}.
\end{equation}

Let us write $F$ for a set of $\omega \in \Omega$ satisfying \eqref{e.limit} and \eqref{e.n} for each $n\in\N$, 
and such that $\PR(F)=1$.

\medskip

To prove the lemma, it now suffices to prove the following:
\begin{claim}
It holds that
\begin{equation}\label{e.uniform}
(\forall  \omega \in F)(\forall \varepsilon \in \NNReals) (\exists m \in \N) (\forall n\in\N) \; \mu_{\omega,n}(B_m) < \varepsilon
\end{equation}
\end{claim}
To see this, let $\omega \in F$ and $\varepsilon > 0$ be given. Find $m$ such that
\[
\varepsilon_{m} < \frac\varepsilon 2.
\]
By \eqref{e.limit} we can find $m' \geq m$
such that
\[
\lim_n \mu_{\omega,n}(B_{m'}) < \varepsilon_{m'}.
\]
Since $(\varepsilon_m)_{m\in\N}$ is decreasing, this implies
\[
\lim_n \mu_{\omega,n}(B_{m'}) < \frac\varepsilon 2.
\]
We can thus find $\bar n$ such that for all $n \geq \bar n$,
\begin{equation}\label{e.tail}
\mu_{\omega, n}(B_{m'}) < \varepsilon.
\end{equation}
Applying, repeatedly, a similar argument for each $n < \bar n$, we can find $m(\bar n-1) \geq  \hdots \geq m(1) \geq m'$ such 
that for each $n <\bar n$, 
\[
 \mu_{\omega,n}(B_{m(n)}) < \varepsilon_{m(n)}.
\]
Since $(B_m)_{m\in\N}$ is $\subseteq$-decreasing and $(\varepsilon_m)_{m\in\N}$ is also decreasing,
for every $n < \bar n$,
\[
 \mu_{\omega,n}(B_{m(\bar n-1)}) \leq  \mu_{\omega,n}(B_{m(n)})  < \varepsilon_{m(n)} <  \varepsilon .
\]
Likewise, since $B_{m(\bar n-1)} \subseteq B_{m'}$ and by \eqref{e.tail}, 
for $n \geq \bar n$,
\[
\mu_{\omega, n}(B_{m(\bar n-1)}) < \varepsilon
\] 
showing that $m = m(\bar n-1)$ satisfies the claim, and thus, finishing the proof of the lemma.
\end{proof}

We now have all the necessary prerequisites for a proof of this section's main theorem.
\begin{proof}[Proof of \cref{t.sufficient}]
Let $X$ be as in the theorem.
Let us write $(\Omega, \Sigma, \PR)$ for the basic probability space, and let us suppose that the process $X$ takes values in 
$S$ carrying a Hausdorff topology; we consider $S$ as a measurable space equipped with $\mathcal B = \Borel(S)$.

By \cref{thm:condiiddefinetti}, it is enough to produce a 
regular conditional distribution of $X_1$, given $\mathcal E_\infty$.
To this end we will apply \cref{t.topsoe.radon} to a large subset of the family of sequences $\{(\mu_{\omega,n})_{n\in\N} \setdef \omega \in\Omega\}$.
Clearly, for every $\omega$ and $n$, $\mu_{\omega,n}$ is a (finitely supported) measure on $(S, \mathcal B)$,
and moreover, $\mu_{\omega,n} \in \radonprob(S)$, 
which is a necessary condition in \cref{t.topsoe.radon}.

Fix a sequence $(K_m)_{m\in\N}$ of compact subsets of $S$ with $\PR(X_1 \in K_m) \to 1$ for $m \to \infty$.
By \cref{l.uniform} we can find a set $F \in \Sigma$ with $\PR(F) = 1$ such that  
\eqref{e.uniform} holds with $B_m = S \setminus K_m$.

\begin{claim}
For each $\omega \in F$, the family $\{\mu_{\omega,n} \setdef n \in \N\}$ is tight.
\end{claim}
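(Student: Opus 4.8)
The plan is to fix $\omega \in F$ and verify the definition of tightness directly: given $\varepsilon > 0$, we must produce a compact $K \subseteq S$ such that $\mu_{\omega,n}(S \setminus K) < \varepsilon$ for all $n \in \N$ simultaneously. The compact sets $K_m$ from the theorem statement are the natural candidates, and the whole point of \cref{l.uniform} (as invoked with $B_m = S \setminus K_m$) is to give us exactly this kind of \emph{uniform-in-$n$} smallness. So the proof is essentially a one-line unpacking of \eqref{e.uniform}.

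In detail: let $\omega \in F$ and $\varepsilon \in \NNReals$ be given. Since $F$ is the set on which \eqref{e.uniform} holds with $B_m = S \setminus K_m$, there is some $m \in \N$ such that for all $n \in \N$,
\[
\mu_{\omega,n}(S \setminus K_m) < \varepsilon.
\]
Since $K_m$ is compact, this witnesses tightness of $\{\mu_{\omega,n} \setdef n \in \N\}$ at level $\varepsilon$. As $\varepsilon$ was arbitrary, the family is tight.

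I do not expect any real obstacle here; the substance has already been absorbed into \cref{l.uniform}, and the only thing to check is that the hypotheses of that lemma are met — namely that $(S \setminus K_m)_{m \in \N}$ is a sequence in $\mathcal S = \mathcal B$ with $\PR(X_1 \in S \setminus K_m) = 1 - \PR(X_1 \in K_m) \to 0$, which is immediate from the choice of the $K_m$ (possible because $\distribution(X_1)$ is Radon, hence tight). One very minor point worth a clause: the definition of tightness of a family of measures requires the compact set to work for \emph{all} members of the family, which is precisely the universal quantifier over $n$ supplied by \eqref{e.uniform} — it would not suffice to have, for each $n$ separately, some $K$; the strength of \cref{l.uniform} is that $m$ (hence $K_m$) does not depend on $n$. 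That is the only place where the careful diagonal argument inside \cref{l.uniform} is actually being used.
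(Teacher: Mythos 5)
Your proof is correct and is essentially identical to the paper's: both fix $\omega \in F$ and $\varepsilon > 0$ and invoke \eqref{e.uniform} with $B_m = S \setminus K_m$ to obtain a single $m$ with $\mu_{\omega,n}(S \setminus K_m) < \varepsilon$ for all $n$, so that the compact $K_m$ witnesses tightness uniformly in $n$. Your added remarks (checking $\PR(X_1 \in S \setminus K_m) \to 0$ and noting that the uniformity in $n$ is the point of \cref{l.uniform}) are accurate but already implicit in how $F$ was chosen in the surrounding proof.
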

\begin{proof}
Given $\omega \in F$, $\varepsilon > 0$, by \eqref{e.uniform} with $B_m = S \setminus K_m$, we can find an $m \in \N$ such that 
$\mu_{\omega,n}(K_m) > 1-\varepsilon$ for all $n\in\N$.
\end{proof}

Momentarily fix $\omega \in F$.
By the previous claim and by \cref{t.topsoe.radon}, 
we can find a subsequence $\big(\mu'_{\omega,n}\big)_{n\in\N}$ of $(\mu_{\omega,n})_{n\in\N}$
and a Radon probability measure $\mu_\omega$ on $S$ such that
$\mu'_{\omega,n} \longrightarrow \mu_\omega$ in the A-topology as $n\to\infty$.

\begin{claim}
For every $B \in \mathcal B$, for $\PR$-almost all $\omega \in \Omega$,
\begin{equation}\label{e.limit.B}
\lim_n \mu_{\omega,n}(B) = \mu_\omega(B).
\end{equation}
\end{claim}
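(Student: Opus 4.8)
\emph{Plan.} The plan is to fix $B\in\mathcal B$ and prove the two inequalities ``$\le$'' and ``$\ge$'' in \eqref{e.limit.B} separately, each for $\PR$-almost every $\omega$ (the exceptional null set is allowed to depend on $B$). The one tool used throughout is the strong law for exchangeable sequences, \cref{prop:exchslln}: applied to the exchangeable, $[0,1]$-valued sequence $(\indicate\{X_n\in C\})_{n\in\N}$ for a fixed $C\in\mathcal S$, it shows that $g_C(\omega):=\lim_n\mu_{\omega,n}(C)$ exists for $\PR$-almost every $\omega$, that $g_C$ is a version of $\Ex[\indicate\{X_1\in C\}\mid\mathcal E_\infty]$, and in particular that $\Ex[g_C]=\PR(X_1\in C)$. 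I will apply this with $C$ ranging over $B$, over $B^c$, over certain compact sets $K_j\subseteq B$, and over the differences $B\setminus K_j$.

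\emph{Upper bound.} Use that $\distribution(X_1)$ is Radon, hence inner regular, to pick an increasing sequence of compact sets $K_j\subseteq B$ with $\PR(X_1\in B\setminus K_j)\to0$. Fix $j$ and restrict to $\omega\in F$, for which $(\mu'_{\omega,n})_n$ converges to $\mu_\omega$ in the A-topology. Since $K_j$ is compact and $S$ is Hausdorff, $K_j$ is closed, so A-convergence gives $\limsup_n\mu'_{\omega,n}(K_j)\le\mu_\omega(K_j)$; because $\lim_n\mu_{\omega,n}(K_j)$ exists $\PR$-a.s.\ and any subsequence of a convergent sequence shares its limit, this yields $g_{K_j}(\omega)\le\mu_\omega(K_j)\le\mu_\omega(B)$ for $\PR$-a.e.\ $\omega\in F$. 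By finite additivity $\mu_{\omega,n}(B)=\mu_{\omega,n}(K_j)+\mu_{\omega,n}(B\setminus K_j)$, so $g_B=g_{K_j}+h_j$ $\PR$-a.s.\ with $h_j:=g_{B\setminus K_j}\ge0$, whence $g_B(\omega)\le\mu_\omega(B)+h_j(\omega)$ $\PR$-a.s. Intersecting over the countably many $j$, for $\PR$-a.e.\ $\omega$ one has $g_B(\omega)\le\mu_\omega(B)+\inf_j h_j(\omega)$; since $\Ex[\inf_j h_j]\le\inf_j\Ex[h_j]=\inf_j\PR(X_1\in B\setminus K_j)=0$ and $\inf_j h_j\ge0$, we get $\inf_j h_j=0$ $\PR$-a.s., and therefore $g_B(\omega)\le\mu_\omega(B)$ for $\PR$-a.e.\ $\omega$. (This route deliberately avoids having to argue that $\omega\mapsto\mu_\omega(B)$ is measurable.)

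\emph{Lower bound and main obstacle.} Apply the upper bound to $B^c\in\mathcal B$ to get $g_{B^c}(\omega)\le\mu_\omega(B^c)$ $\PR$-a.s.; since $\mu_{\omega,n}(B^c)=\mu_{\omega,n}(S)-\mu_{\omega,n}(B)$ and $\mu_{\omega,n}(S)\to1$, we have $g_{B^c}=1-g_B$ $\PR$-a.s., while $\mu_\omega(B^c)=1-\mu_\omega(B)$ as $\mu_\omega$ is a probability measure, so $g_B(\omega)\ge\mu_\omega(B)$ $\PR$-a.s. Together with the upper bound this proves \eqref{e.limit.B}. I expect the conceptual obstacle to be exactly the one this argument is designed around: the A-topology is one-sided — $\mu'_{\omega,n}\to\mu_\omega$ controls only $\limsup$-s of masses of closed sets — so \eqref{e.limit.B} is not visible directly, not even for closed $B$, and the fix is to sandwich a general Borel set between its compact subsets (using Radon-ness of $\distribution(X_1)$, with the strong law turning the approximation error into the integrable $h_j$) and then pass from the resulting one-sided bound to an equality by also applying it to $B^c$. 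A minor point to keep in mind is that the subsequence $(\mu'_{\omega,n})$ may depend on $\omega$, which causes no trouble because \cref{prop:exchslln} already makes the \emph{full} sequence $\mu_{\omega,n}(C)$ converge for each fixed $C$.
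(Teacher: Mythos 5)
Your proof is correct, and it reaches \eqref{e.limit.B} by a genuinely different mechanism for controlling the approximation error, even though the two pillars are the same as in the paper (the exchangeable strong law \cref{prop:exchslln} identifying $g_C(\omega)=\lim_n\mu_{\omega,n}(C)$ as a version of $\Ex[\indicate\{X_1\in C\}\mid\mathcal E_\infty]$, and the closed-set $\limsup$ inequality furnished by A-convergence applied to compact, hence closed, inner approximations coming from Radon-ness). The paper instead fixes compacts $K^1_m\subseteq B$ and $K^0_m\subseteq S\setminus B$ and invokes \cref{l.uniform} to place $\omega$ in a single full-measure event on which $\mu_{\omega,n}(B\setminus K^1_m)<\varepsilon/2$ \emph{uniformly in $n$}, then derives a contradiction from an assumed failure of \eqref{e.limit.B}; you replace this uniformity by the observation that the error terms $h_j=g_{B\setminus K_j}$ satisfy $\Ex[h_j]=\PR(X_1\in B\setminus K_j)\to0$, hence $\inf_j h_j=0$ a.s., which gives the one-sided bound $g_B\le\mu_\omega(B)$ directly, and you then obtain the reverse inequality by applying the same bound to $B^c$ (the paper handles that case by the same complement substitution inside its contradiction argument). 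What each buys: the paper's route reuses \cref{l.uniform}, which it must prove anyway for the tightness claim, and handles all exceptional sets through one event $F$; your route is more elementary at this particular step, since only almost-sure smallness of the \emph{limiting} error is needed here rather than uniform-in-$n$ control, and it makes the two-sided structure (compact sandwich of $B$ and $B^c$ against the one-sided A-topology inequality) fully explicit. Your side remarks are also sound: the $\omega$-dependence of the subsequence is harmless because the full sequence $\mu_{\omega,n}(C)$ converges a.s. for each fixed $C$, and no measurability of $\omega\mapsto\mu_\omega(B)$ is needed since $\mu_\omega(B)$ is only used pointwise.
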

\begin{proof}
Let $B \in \mathcal B$ be given.
Since $\distribution(X_1)$ is Radon, we can find %
sequences $(K^j_m)_{m\in\N}$ for each $j \in \{0,1\}$, where each $K^j_m$ is a compact subset of $S$ such that
\begin{itemize}
\item $K^1_m \subseteq B$, $\PR(X_1 \in B\setminus K^1_m) \to 0$ as $m \to \infty$, 
\item $K^0_m \subseteq S\setminus B$, 
and 
 $\PR\big(X_1 \in (S\setminus B)\setminus K^0_m\big) \to 0$ as $m \to \infty$.
 \end{itemize}
 
Fix $F \in \Sigma$ with $\PR(F) =1$ such that for each $\omega \in F$, 
 \eqref{e.uniform} holds with respect to the sequence given by $B_m = B \setminus K^1_m$, as well as with the sequence given by $B_m = (S \setminus B) \setminus K^0_m$, and finally, such that 
 $\big(\mu_{\omega,n}(K^j_m)\big)_{n\in\N}$ converges in $[0,1]$ for each $\omega \in F$ and each $m \in \N$ and $j \in\{0,1\}$.

Towards a contradiction, suppose that $\omega \in F$ and \eqref{e.limit.B} fails.
Assume first that the left-hand side in \eqref{e.limit.B} is strictly above the right-hand side; fix $\varepsilon > 0$ such that
\begin{equation}\label{e.limit.fail}
\lim_n \mu_{\omega,n}(B)  > \mu_\omega(B) + \varepsilon.
\end{equation}
Since $\omega \in F$ we can find $m \in \N$ such that $\mu_{\omega,n}(B \setminus K^1_m) < \frac\varepsilon 2$,
and hence $\mu_{\omega,n}(B) < \mu_{\omega,n}(K^1_m) + \frac\varepsilon 2$, for all $n\in\N$.
We now have
\begin{equation}\label{e.limit.contradiction}
\lim_n \mu_{\omega,n}(B) \leq \lim_n \mu_{\omega,n}(K^1_m) + \frac\varepsilon 2 \leq \mu_\omega(K^1_m) + \frac\varepsilon 2
\leq  \mu_\omega(B) + \frac\varepsilon 2
\end{equation}
where the second inequality uses convergence in the A-topology, and that since $\omega \in F$, $\big(\mu_{\omega,n}(K^j_m)\big)_{n\in\N}$ is convergent.
But \eqref{e.limit.contradiction} contradicts \eqref{e.limit.fail} above.

In case that the left-hand side in \eqref{e.limit.B} is strictly below the right-hand side,
replace $B$ with $S\setminus B$ and $(K^1_m)_{m\in\N}$ with $(K^0_m)_{m\in\N}$ in the above argument.
Having reached a contradiction in either case, we infer that \eqref{e.limit.B} holds for each $\omega \in F$.
\end{proof}

We have thus constructed $\omega \mapsto \mu_\omega$ such that 
for any $B \in \mathcal B$,
for $\PR$-almost every $\omega \in \Omega$ it holds that
\[
\mu_\omega(B)
= \lim_{n \to\infty} \mu_{\omega,n}(B).
\]
By strong law of large numbers for exchangeable sequences (\cref{prop:exchslln}),
 therefore, $\mu_\omega(B) = \Ex[\indicate(X_1\in B) \vert \mathcal E_\infty]$ for $\PR$-almost every $\omega \in \Omega$.
In particular, $\omega \mapsto \mu_\omega(B)$ is measurable $(\Omega,\Sigma) \to [0,1]$ and hence, $(\mu, B) \mapsto \mu_\omega(B)$ is a regular conditional distribution of $X_1$ given $\mathcal E_\infty$.
\end{proof}

\section{When is a measure Radon?}\label{s.when.radon}

Following \citet{alam}, let us call a probability measure $\mu$ on a topological space $S$ \textbf{tight} if for every $\varepsilon \in \PosReals$ there is a compact set $K \subseteq S$ such that $\mu(K) > 1- \varepsilon$.
Suppose $K \subseteq S$; recall 
that $\mu$ is said to be \textbf{outer regular on $K$} if 
\[
\mu(K) = \inf\{\mu(O) \setdef K \subseteq O, \text{ $O\subseteq S$ is open}\}.
\]
We now show:

\begin{proposition}\label{p.when.radon}
Suppose that $\mu$ is a Borel probability measure on a Hausdorff space $S$.
Then $\mu$ is Radon if and only if 
$\mu$ is both tight and 
outer regular on compact sets.
\end{proposition}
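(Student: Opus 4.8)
The plan is to prove the two implications separately; the forward direction is a short argument using Hausdorffness, and the reverse direction is the standard "regular sets form a $\sigma$-algebra" argument, where the one genuinely substantial input is that tightness forces closed sets to be inner regular by compact sets.

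For the forward direction, assume $\mu$ is Radon. Tightness is immediate: apply inner regularity to $B = S$. For outer regularity on a compact $K$, observe that $S \setminus K$ is open, hence Borel, so by inner regularity, given $\varepsilon > 0$ there is a compact $L \subseteq S \setminus K$ with $\mu(L) > \mu(S\setminus K) - \varepsilon$. Since $S$ is Hausdorff, the disjoint compact sets $K$ and $L$ can be separated by disjoint open sets $U \supseteq K$, $V \supseteq L$; then $U \subseteq S\setminus L$, so $\mu(U) \le 1 - \mu(L) < \mu(K) + \varepsilon$. Hence $\inf\{\mu(O)\setdef K\subseteq O \text{ open}\} \le \mu(K)$, and the reverse inequality is monotonicity.

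For the reverse direction, assume $\mu$ is tight and outer regular on compact sets, and proceed in three steps. First, every closed $C$ is inner regular by compact sets: by tightness pick a compact $L$ with $\mu(S\setminus L) < \varepsilon$; then $C\cap L$ is a compact subset of $C$ with $\mu(C\setminus(C\cap L)) \le \mu(S\setminus L) < \varepsilon$. Second, every closed $C$ is \emph{regular}, meaning for every $\varepsilon>0$ there are a compact $K$ and an open $O$ with $K\subseteq C\subseteq O$ and $\mu(O\setminus K) < \varepsilon$: the inner half is step one, and for the outer half take $L$ with $\mu(S\setminus L) < \varepsilon/2$, apply outer regularity to the compact set $C\cap L$ to get an open $U\supseteq C\cap L$ with $\mu(U) < \mu(C) + \varepsilon/2$, and set $O := U\cup(S\setminus L)$, which is open, contains $C = (C\cap L)\cup(C\setminus L)$, and has $\mu(O) < \mu(C)+\varepsilon$. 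Third, let $\mathcal R$ be the collection of regular Borel sets and check it is a $\sigma$-algebra. Closure under complements is where step one is used: if $K\subseteq B\subseteq O$ witnesses regularity of $B$, then $S\setminus O$ is closed, so by step one it contains a compact $K'$ with $\mu((S\setminus O)\setminus K') < \varepsilon$; now $K'\subseteq S\setminus B\subseteq S\setminus K$, and since $S\setminus K$ is the disjoint union of $S\setminus O$ and $O\setminus K$ we get $\mu((S\setminus K)\setminus K') = \mu((S\setminus O)\setminus K') + \mu(O\setminus K) < 2\varepsilon$. Closure under countable unions is the familiar $\varepsilon 2^{-n}$ argument: pick $K_n\subseteq B_n\subseteq O_n$ with $\mu(O_n\setminus K_n) < \varepsilon 2^{-n-1}$, set $O := \bigcup_n O_n$ and $C_N := \bigcup_{n\le N} K_n$, note $\mu(O\setminus\bigcup_n K_n) < \varepsilon/2$, and choose $N$ large enough that $\mu(\bigcup_n K_n \setminus C_N) < \varepsilon/2$. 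By step two $\mathcal R$ contains all closed sets, hence $\mathcal R = \Borel(S)$, and the inner half of regularity is exactly the Radon property.

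The main obstacle is the third step, and specifically closure of $\mathcal R$ under complementation: this is precisely why step one (inner regularity of closed sets, the sole place where tightness is essential) must be isolated and proved first, since without it one cannot pass from an outer approximation of $B$ to an inner approximation of $S\setminus B$. Everything else reduces to bookkeeping with finitely many sets and finite, additive estimates.
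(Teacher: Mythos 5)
Your proof is correct and follows essentially the same route as the paper's: tightness yields compact inner approximations of closed sets, outer regularity is transferred from compact to closed sets by the same compact-$K$ plus $(S\setminus K)$ trick, and a $\sigma$-algebra of ``regular'' sets then captures all Borel sets --- you merely carry compact (rather than closed) inner approximations through the $\sigma$-algebra step, spelling out the closure-under-complement and countable-union arguments the paper labels standard, whereas the paper upgrades closed to compact inner approximations only at the very end. You additionally prove, via Hausdorff separation of disjoint compact sets, the forward direction that the paper dismisses as well known.
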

Note again that if $S$ is a metric space, it is known that tightness of $\mu$ alone already implies that $\mu$ is Radon.

For the proof, also recall that given $G \subset S$,
 $\mu$ is said to be \textbf{inner regular on $G$ with respect to $\mathcal K$} if 
\[
\mu(G) = \inf\{\mu(K) \setdef K \subseteq G, \text{ $K\in\mathcal K$}\}.
\]
Moreover, \textbf{inner regular on $G$} means inner regular with respect to the collection of compact sets.

\begin{proof}
We only have to show the direction from tightness and outer regularity on compact sets to Radon-ness, the other being well-known.

First we show that $\mu$ is outer regular on closed sets:
Given a closed set $F\subseteq S$ and $\varepsilon\in\PosReals$ , 
find $K$ compact such that $\mu(X\setminus K) < \frac\varepsilon2$, 
then find $O$ such that $F\cap K \subseteq O$ and $\mu\big(O \setminus (K\cap F)\big) < \frac\varepsilon2$.
Then $F \subseteq O \cup (X \setminus K)$ and %
\begin{align*}
\mu\big(O \cup (X\setminus K)\big)
=
 \mu(O \cap K\cap F) &+ \mu\big(O\cap K \setminus F\big) +\mu(X\setminus K) 
 \\
 \leq 
 \mu(F) &+ \mu\big(O \setminus (K\cap F)\big) +\mu(X\setminus K) 
 <
 \mu(F) + \varepsilon.
 \end{align*}
Let us write $\mathcal B$ for the collection of sets on which $\mu$ is both inner regular with respect to the closed sets and outer regular.
We have just shown that $\mathcal B$ contains the closed subsets of $S$.
It follows by standard arguments that $\mathcal B \supseteq \Borel(S)$.

Finally, $\mu$ is inner regular on the closed sets:
Let $F \subseteq S$ be closed and let $\varepsilon \in \PosReals$ be given. 
Find $K$ such that $\mu(X\setminus K) < \varepsilon$. 
Then in particular, $\mu(F) = \mu(K\cap F) + \mu(F\setminus K) < \mu(K \cap F) + \varepsilon$. 
Since $\mu$ is inner regular on Borel sets with respect to the closed sets, and also inner regular on closed sets, $\mu$ is inner regular and hence Radon.
\end{proof}

\section*{Acknowledgments}

DMR is supported in part by Canada CIFAR AI Chair funding through the Vector Institute and an NSERC Discovery Grant.

\appendix

\section{Further proof details}\label{appendix}

We give the proof of one of the main ingredients for \cref{t.sufficient}, namely the theorem about convergence of measures, 
\cref{t.topsoe.radon}.
We repeat \cref{t.topsoe.radon} here for the readers convenience:
\begin{theorem}\label{a.t.topsoe.radon}
Suppose $S$ is a Hausdorff topological space and
$\{\mu_n \setdef n\in\N\} \subseteq \radonprob(S)$.
Further suppose that
$\{\mu_n \setdef n\in\N\}$ is tight.
Then every sequence from $\{\mu_n \setdef n\in\N\}$
has a subsequence which converges in $\radonprob(S)$ with respect to the A-topology.
\end{theorem}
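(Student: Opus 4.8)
The plan is to obtain \cref{a.t.topsoe.radon} as the sequential case of Topsøe's general compactness theorem for Radon measures in the A-topology (the w-topology of \cite{topsoe}), carrying out the subnet-to-subsequence bookkeeping by hand. First I would use uniform tightness to pass to a convenient carrier: choose an increasing sequence of compact sets $K_1 \subseteq K_2 \subseteq \cdots$ with $\sup_{n} \mu_n(S \setminus K_j) < 2^{-j}$ for every $j$. Then every $\mu_n$ assigns full measure to the $\sigma$-compact set $T = \bigcup_j K_j$, so for a closed $F \subseteq S$ we have $\mu_n(F) = \mu_n(F \cap T)$; hence A-convergence of a sequence of members of our family is equivalent whether tested against closed subsets of $S$ or of $T$, and we may argue inside $T$. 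Fix an arbitrary sequence drawn from $\{\mu_n : n \in \N\}$; relabelling, assume it is $(\mu_n)_{n\in\N}$ itself.

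Next I would extract the limit. Topsøe's theorem gives that a uniformly tight family of Radon probability measures is relatively compact in the A-topology and that, thanks to uniform tightness, no mass escapes in the limit, so every A-limit point is again a Radon \emph{probability} measure; this already produces a subnet of $(\mu_n)$ converging to some $\mu \in \radonprob(S)$. To turn this into a subsequence, I would run a diagonal argument against a countable family: let $\mathcal{G}_0$ be a countable collection of open subsets of $S$ built from the $K_j$ (for instance, traces of a countable base on each compact $K_j$, closed under finite unions, together with the sets $S \setminus K_j$; in the finitely supported setting to which the theorem is actually applied, one may instead simply take $\mathcal{G}_0 = \{\{c\} : c \in \bigcup_n \supp\mu_n\}$, which is countable). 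Passing to a subsequence $(\mu_{n_k})_k$ along which $\mu_{n_k}(G)$ converges for every $G \in \mathcal{G}_0$, define a set function on closed sets by $\mu(F) = \inf\{\lim_k \mu_{n_k}(G) : G \in \mathcal{G}_0,\ F \subseteq G\}$, show — using that $\mathcal G_0$ separates the compacta $K_j$ and that these carry almost all of the mass uniformly — that $\mu$ extends to a Radon probability measure on $S$, and finally verify $\limsup_k \mu_{n_k}(F) \le \mu(F)$ for every closed $F \subseteq S$, i.e. $\mu_{n_k} \to \mu$ in the A-topology.

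The hard part is precisely this last upgrade from a convergent subnet (which Topsøe's theorem delivers directly) to a convergent subsequence: one must choose $\mathcal{G}_0$ countable yet rich enough both to pin down a Radon probability measure via inner regularity on compacta and to be compatible with a single diagonal extraction, and this is exactly where uniform tightness is indispensable, since it is what confines the limiting mass to the $K_j$ and prevents it from dissipating. By contrast, once the candidate limit $\mu$ is in hand, checking that it is genuinely a probability measure and that A-convergence holds are routine approximation arguments: both reduce to approximating an arbitrary closed set from outside by members of $\mathcal{G}_0$ and an arbitrary compact set from inside, using outer regularity of $\mu$ on closed sets and inner regularity on compacta together with the uniform tail bounds $\mu_n(S \setminus K_j) < 2^{-j}$.
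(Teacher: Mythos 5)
You follow the same route as the paper: reduce to Topsøe's criterion (\cref{t.topsoe}) with $\mathcal K$ the compact and $\mathcal G$ the open subsets of $S$, and use tightness to verify the domination condition; this yields that every subnet of $(\mu_n)_{n\in\N}$ has a further subnet converging in the A-topology, which is exactly where the paper's appendix proof of \cref{a.t.topsoe.radon} stops. You are right to single out the passage from a convergent subnet to a convergent subsequence as the real remaining work, but the mechanism you propose for it does not exist in this generality. Your countable family $\mathcal G_0$ is built from ``traces of a countable base on each compact $K_j$'', yet a compact subset of an arbitrary Hausdorff space need not be second countable (nor metrizable), so such a base need not exist; and the fallback $\mathcal G_0=\{\{c\}\setdef c\in\bigcup_n\supp\mu_n\}$ is not available either, since the theorem concerns arbitrary Radon $\mu_n$ rather than the finitely supported empirical measures of the application, and singletons are neither open nor usable in the outer approximation $\mu(F)=\inf\{\lim_k\mu_{n_k}(G)\setdef F\subseteq G\in\mathcal G_0\}$ on which your construction of the limit rests.

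Moreover, the defect cannot be repaired by a cleverer choice of countable family: tightness and Radon-ness alone do not yield a sequential extraction. Take $S=\beta\N$ and $\mu_n=\delta_n$, the Dirac measure at $n\in\N\subseteq\beta\N$; this family is tight (the whole space is compact) and consists of Radon probability measures. If some subsequence $(\delta_{n_k})_k$ A-converged to $\mu\in\radonprob(S)$, split $\{n_k\setdef k\in\N\}$ into disjoint infinite sets $A$ and $B$; their closures $\bar A,\bar B$ in $\beta\N$ are disjoint closed sets with $\limsup_k\delta_{n_k}(\bar A)=\limsup_k\delta_{n_k}(\bar B)=1$, forcing $\mu(\bar A)=\mu(\bar B)=1$, which is impossible for a probability measure. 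So no countable test class can drive a diagonal argument here; any subsequence extraction must exploit structure beyond what the hypotheses of \cref{a.t.topsoe.radon} provide (for instance, metrizability or second countability of the compact carriers $K_j$, under which your diagonalization would indeed go through). As written, the step you yourself call the hard part is precisely the one that fails, and it is also the step that the deduction from \cref{t.topsoe} in the appendix leaves unaddressed, since Topsøe's criterion delivers subnet compactness only.
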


As we have mentioned, this can be derived as a corollary to the following very general theorem of
Topsøe (see \cite{topsoe}; compare also \cite{topsoe-book} and \cite{topsoe-1974}).
Due to its generality, we will need to set up a bit of nomenclature, taken from \cite{topsoe}, before we can state it.

\medskip

In the next theorem, 
we will be speaking about a set $S$ and two families $\mathcal K, \mathcal G \subseteq \powerset(S)$ (sometimes such families are called ``pavings'').
In this situation, let us write $\mathcal B(\mathcal K)$ for the $\sigma$-field generated by the set of
$E \subseteq S$ such that $E \cap K \in \mathcal K$ for every $K\in \mathcal K$.
We say that $\mathcal G' \subseteq \mathcal G$ \inlinedfn{dominates} $\mathcal K$ to mean that
\[
(\forall K \in \mathcal K) (\exists G \in \mathcal G')\; K \subseteq G.
\]
Finally, let us write $\measreg(S; \mathcal K)$ 
for the set of measures $\mu$ with $\dom(\mu) = \mathcal B(\mathcal K)$
which are finite (that is, $\mu(S) = 1$)  
and regular with respect to $\mathcal K$, that is, 
$\mu(B) = \sup\{\mu(K) \setdef K \in \mathcal K, K \subseteq B\}$ for all $B \in \mathcal B(\mathcal K)$.

\begin{theorem}
[Topsøe]
\label{t.topsoe}
Suppose we have a set $S$ and $\mathcal K, \mathcal G \subseteq \powerset(S)$ such that
\begin{enumerate}[label=\Roman*.]
\item $\emptyset \in \mathcal K$, and $\mathcal K$ is closed under finite unions and countable intersections  (i.e.,
``$\mathcal K$ is a $(\emptyset, \bigcup \mathrm{f}, \bigcap \mathrm{c})$-paving''),
\item $\emptyset \in \mathcal G$, and $\mathcal G$ is closed under finite unions and finite intersections  (i.e.,
``$\mathcal G$ is a $(\emptyset, \bigcup \mathrm{f}, \bigcap \mathrm{f})$-paving''),
\item $(\forall K \in \mathcal K) (\forall G \in \mathcal G) \; K \setminus G \in \mathcal K$,
\item $\mathcal G$ seperates sets in $\mathcal K$, that is, for $K_0, K_1 \in \mathcal K$ there are $G_i \in \mathcal G$ with 
$K_i \subseteq G_i$ for each $i \in\{0,1\}$ such that $G_0 \cap G_1 = \emptyset$,
\item $\mathcal K$ is semi-compact, that is, every countable subset of $\mathcal K$ with the finite intersection property has non-empty intersection.
\end{enumerate}
Let $\bar \mu = (\mu_\lambda)_{\lambda \in \Lambda}$ be a net from $\measreg(S; \mathcal K)$. Then the following are equivalent:
\begin{enumerate}[label=(\Alph*)]
\item Every subnet of $\bar \mu$ has a further subnet which converges in the A-topology 
(``$\bar \mu$ is compact'').
\item The conjunction of the following:
\begin{enumerate}[label=(\roman*)]
\item $\limsup_\lambda \mu_\lambda(S) < \infty$, and
\item\label{i.topsoe} For every $\mathcal G' \subseteq \mathcal G$ which dominates $\mathcal K$,
\begin{equation}\label{e.topsoe}
\inf_{\mathcal G''} \limsup_\lambda \min_{G \in \mathcal G''} \mu_\lambda(S\setminus G) = 0
\end{equation}
where the infimum is taken over all finite $\mathcal G'' \subseteq \mathcal G'$.
\end{enumerate}
\end{enumerate}

\end{theorem}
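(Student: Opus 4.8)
The plan is to read this as an abstract Prokhorov-type compactness criterion for nets of finite $\mathcal K$-regular measures in the A-topology, with $\mathcal K$ abstracting the compact sets, $\mathcal G$ the open sets, and axioms I--V encoding exactly the Boolean stability (III), Hausdorff separation (IV) and finite-intersection compactness (V) of the compact/open relation that a content-to-measure extension requires. I would prove the substantive implication $(B)\Rightarrow(A)$ by constructing A-limits, and treat $(A)\Rightarrow(B)$ as a consistency check.

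For $(B)\Rightarrow(A)$, fix an arbitrary subnet of $\bar\mu$; it suffices to extract from it a further A-convergent subnet. By (i) the total masses are eventually bounded by some $M<\infty$, so $\lambda\mapsto\big((\mu_\lambda(K))_{K\in\mathcal K},(\mu_\lambda(G))_{G\in\mathcal G}\big)$ takes values in the Tychonoff-compact cube $[0,M]^{\mathcal K}\times[0,M]^{\mathcal G}$; passing to a convergent subnet there produces limits $\alpha(K)=\lim_\lambda\mu_\lambda(K)$ and $\beta(G)=\lim_\lambda\mu_\lambda(G)$. The first task is to recognize $\alpha$ as a $\mathcal K$-regular content: monotonicity and strong subadditivity pass to the limit from the $\mu_\lambda$, while continuity from above along $\subseteq$-decreasing sequences in $\mathcal K$ follows from closure of $\mathcal K$ under countable intersections together with semi-compactness (V). Topsøe's content-to-measure extension (where the separation axiom IV guarantees compatibility of the inner $\mathcal K$-content with the $\mathcal G$-structure) then yields a unique finite $\mu\in\measreg(S;\mathcal K)$ with $\mu(K)=\alpha(K)$ for all $K\in\mathcal K$, inner regular with respect to $\mathcal K$ on $\mathcal B(\mathcal K)$.

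The decisive use of (ii) is to pin down the total mass and forbid mass escape. By inner regularity $\mu(S)=\sup_{K\in\mathcal K}\alpha(K)$, which a priori can fall short of $\lim_\lambda\mu_\lambda(S)$, the deficit being mass escaping past every $\mathcal K$-set. Condition (ii) is precisely the abstract tightness hypothesis that excludes this: given $\varepsilon>0$ and a dominating $\mathcal G'\subseteq\mathcal G$, it furnishes a finite $\mathcal G''$ with $\limsup_\lambda\min_{G\in\mathcal G''}\mu_\lambda(S\setminus G)<\varepsilon$, and axiom III (stability of $\mathcal K$ under removing $\mathcal G$-sets) converts this into a single $\mathcal K$-set carrying all but $\varepsilon$ of the limiting mass. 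Hence $\sup_{K}\alpha(K)=\lim_\lambda\mu_\lambda(S)$, so $\mu$ has the correct total mass and no mass leaks out.

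It then remains to verify A-convergence and the converse. For a basic closed set $S\setminus G$ with $G\in\mathcal G$, inner regularity gives $\mu(G)=\sup\{\alpha(K):K\subseteq G,\ K\in\mathcal K\}\le\beta(G)$, so $\limsup_\lambda\mu_\lambda(S\setminus G)=\lim_\lambda\mu_\lambda(S)-\beta(G)\le\mu(S)-\mu(G)=\mu(S\setminus G)$, using the total-mass identity; the defining inequalities of the A-topology then extend to all of $\mathcal B(\mathcal K)$ by $\mathcal K$-regularity. For $(A)\Rightarrow(B)$: (i) holds because the A-topology forces $\limsup_\lambda\mu_\lambda(S)\le\mu(S)<\infty$ at the finite limit of any convergent subnet; and (ii) holds by contraposition, since if it failed for some dominating $\mathcal G'$ a fixed amount $\delta>0$ of mass would persistently evade every finite pool drawn from $\mathcal G'$, which---because such pools exhaust any prescribed $\mathcal K$-set by domination---is incompatible with the $\mathcal K$-inner-regularity of any A-limit. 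I expect the main obstacle to be the content-to-measure extension together with the mass accounting of the third paragraph: proving that the escape condition (ii) exactly compensates the gap between the inner content $\sup_K\alpha(K)$ and the limiting total mass is the crux on which both the existence of the limit and the whole equivalence turn.
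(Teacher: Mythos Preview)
The paper does not prove this theorem. It is quoted as a result of Tops\o e, with references to his original papers, and is used only as a black box from which \cref{t.topsoe.radon} is derived in the appendix. There is therefore no ``paper's own proof'' to compare your proposal against.

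Your sketch is, in outline, the standard route to Prokhorov-type results in Tops\o e's abstract setting: pass to a universal subnet (or use Tychonoff) to get pointwise limits on $\mathcal K$ and $\mathcal G$, recognize the $\mathcal K$-limits as a tight content, extend to a $\mathcal K$-regular measure, and use the tightness hypothesis (ii) to rule out mass escape. One point where your ordering is slightly optimistic: you write that ``continuity from above along $\subseteq$-decreasing sequences in $\mathcal K$ follows from closure of $\mathcal K$ under countable intersections together with semi-compactness (V).'' Semi-compactness alone does not give you that $\alpha(K_n)\to\alpha(K)$ for $K_n\downarrow K$; it only tells you the intersection is nonempty when the $K_n$ have the finite intersection property. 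Establishing smoothness of the limiting content in fact already needs the separation axiom (IV) and, depending on how you organize the argument, the tightness condition (ii) as well---not merely at the end for the total-mass bookkeeping. So the ``crux'' you identify at the end is real, but it intrudes earlier than you suggest. If you want to carry this out in full, Tops\o e's original paper is the place to look; the present paper simply cites it.
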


\begin{proof}[Proof of \cref{a.t.topsoe.radon} from \cref{t.topsoe}]
Let a Hausdorff topological space $S$ and a tight family of Radon probabilities $\{\mu_n \setdef n\in\Nats\}$ be given.
Let, for each $n \in \Nats$,
\[
\mathcal N_n = \{N \subseteq S \setdef \big(\exists N' \in \Borel(S)\big) N \subseteq N' \text{ and } \mu_n(N') =0\big\}
\]
and
\[
\mathcal B = \left\{B \Delta N \;\mid\;  N \in \bigcap_{n\in\Nats} \mathcal{N}_n \right\}
\]
Let $\hat\mu_n$ be the obvious extension of $\mu_n$ to $\mathcal B$.
Take $\mathcal G$ to be the open subsets of $S$, $\mathcal K$ to be the compact subsets of $S$.
Then by Radon-ness and tightness, $\mathcal B(\mathcal K) \subseteq \mathcal B$ and $\hat{\mu}_n \res \mathcal B(\mathcal K) \in \measreg(S; \mathcal K)$. 
Trivially, \eqref{e.topsoe} is equivalent to the following:
\[
(\forall\varepsilon \in \NNReals) \left(\exists \mathcal G'' \in [\mathcal G']^{<\Nats}\right)(\exists \lambda_0 \in \Lambda)
(\forall \lambda \mathbin{>_\Lambda} \lambda_0)(\exists G \in \mathcal G'') \; \mu_\lambda(S \setminus G) < \varepsilon
\]
and therefore, clearly, tightness implies that \cref{i.topsoe} holds of the sequence $n \mapsto \hat{\mu}_n \res \mathcal B(\mathcal K)$.
It follows that every subnet of $(\mu_n)_{n\in\Nats}$ has a further subnet which converges in the A-topology.
\end{proof}

\section{Definitions}

\begin{definition}[Markov Kernel]
    Let $(\Omega_1, \Sigma_1), (\Omega_2, \Sigma_2)$ be measurable spaces. A map $\kappa : \Omega_1 \times \Sigma_2 \to [0,1]$ is called a $\inlinedfn{Markov kernel}$ if
    \begin{enumerate}
        \item $\omega_1 \mapsto \kappa(\omega_1, A_2)$ is $\Sigma_1$-measurable for any $A_2 \in \Sigma_2$,
        \item $A_2 \mapsto \kappa(\omega_1, A_2)$ is a probability measure on $(\Omega_2, \Sigma_2)$ for any $\omega_1 \in \Omega_1$. \label{item:defkernel}
    \end{enumerate}
    We also use the notation $\kappa_{\omega_1}(A_2) := \kappa(\omega_1, A_2)$.   
\end{definition}

\begin{definition}[Regular Conditional Distribution]
    Let $X$ be a random variable from the probability space $(\Omega, \Sigma, \PR)$ to the measurable space $(S, \mathcal{S})$ and let $\mathcal{G} \subseteq \Sigma$ be a sub $\sigma$-algebra. A Markov kernel $\kappa$ from $(\Omega, \mathcal{G})$ to $(S, \mathcal{S})$ is called a \inlinedfn{regular conditional distribution} of $X$ given $\mathcal{G}$ if for all $B \in \mathcal{S}$
    $$\kappa(\omega, B) = \Ex[\indicate\{X \in B\}|\mathcal{G}](\omega)$$
    for $\PR$-almost all $\omega \in \Omega$.
\end{definition}

\begin{definition}[Exchangeable, Left-Shift Invariant, Tail $\sigma$-algebras]
    Let $(X_n)_{n \in \N}$ be a sequence of measurable maps from $(\Omega, \Sigma)$ to $(S, \mathcal{S})$. Define $X : (\Omega, \Sigma) \to (S^{\N}, \mathcal{S}^{\N})$ to have coordinate projections given by the $(X_n)_{n \in \N}$. Let $\tau : S^{\N} \to S^{\N}$ be the left-shift operator defined by $\tau(s_1, s_2, \hdots) := (s_2, s_3, \hdots)$. The \inlinedfn{left-shift invariant $\sigma$-algebra} on $\Omega$ is
    \begin{equation*}
        \mathcal{I}_{\infty} := \{X^{-1}(A) : A \in S^{\N}, A = \tau^{-1}(A)\}.
    \end{equation*}
    The \inlinedfn{tail $\sigma$-algebra} on $\Omega$ is
    \begin{equation*}
        \mathcal{T}_{\infty} := \bigcap_{n \in \N} \sigma(X_n, X_{n+1},  \dots ). 
    \end{equation*}
    The \inlinedfn{exchangeable $\sigma$-algebra} on $\Omega$ is
    \begin{equation*}
        \mathcal{E}_{\infty} := \{X^{-1}(A) : A \in \mathcal{S}^{\N},  A = \pi ^{-1}(A) \text{ for all finite permutations $\pi$ of $\N$} \}.
    \end{equation*}
where of course,  $\pi(s_1, s_2, \hdots)$ means
$(s_{\pi(1)}, s_{\pi(2)}, \hdots )$ for $(s_1, s_2, \hdots) \in \mathcal S^\N$
\end{definition}

\renewcommand{\bibfont}{\small}
\printbibliography

\end{document}